\newenvironment{dedication}
     {\vspace{6ex}\begin{quotation}\begin{center}\begin{em}}
     {\par\end{em}\end{center}\end{quotation}}
\theoremstyle{plain}
\newtheorem{theorem}{Theorem}
\newtheorem{lemma}[theorem]{Lemma}
\newtheorem*{lemma*}{Lemma}
\theoremstyle{definition}
\newtheorem{remark}[theorem]{Remark}
\theoremstyle{remark}
\numberwithin{equation}{section}
\def\R{\mathbb{R}}
\def\Z{\mathbb{Z}}
\def\N{\mathbb{N}}
\def\T{\mathbb{T}}
\def\ve{\varepsilon}
\newcommand{\diff}{\mathop{}\mathopen{}\mathrm{d}}
\newcommand{\abs}[1]{\lvert#1 \rvert}
\newcommand\eps{\varepsilon}
\newcommand{\be}{\begin{equation}}
\newcommand{\ee}{\end{equation}}
\newcommand{\nd}{\noindent}
\newcommand{\card}{\textrm{card}}
\title{A necessary condition in a De Giorgi type conjecture for elliptic systems 
in infinite strips}
\author{
{\Large Radu Ignat}
\footnote{Institut de Math\'ematiques de Toulouse \& Institut Universitaire de France, UMR 5219, Universit\'e de Toulouse, CNRS, UPS
IMT, F-31062 Toulouse Cedex 9, France. Email: Radu.Ignat@math.univ-toulouse.fr} 
\and {\Large Antonin Monteil }\footnote{Institut de Recherche en Math\'ematique et Physique, Universit\'e catholique de Louvain, \'Ecole de Math\'ematique, Chemin du Cyclotron 2, bte L7.01.02, 1348 Louvain-la-Neuve, Belgium. Email: Antonin.Monteil@uclouvain.be
}
}
\begin{document}
\maketitle

\vspace{-1cm}
\begin{dedication}
Dedicated to Ha\"im Brezis on his seventy-fifth anniversary\\ with esteem
\end{dedication}

\bigskip

\begin{abstract}
Given a bounded Lipschitz domain \(\omega\subset\R^{d-1}\) and a lower semicontinuous function \(W:\R^N\to\R_+\cup\{+\infty\}\) that vanishes on a finite set and that is bounded from below by a positive constant at infinity, we show that every map \(u:\R\times\omega\to\R^N\) with 
$$\int_{\R\times\omega} \big(\abs{\nabla u}^2+W(u)\big)\diff x_1\diff x'<+\infty$$ has a limit \(u^\pm\in\{W=0\}\) as \(x_1\to\pm\infty\). The convergence holds in \(L^2(\omega)\) and almost everywhere in \(\omega\). We also prove a similar result for more general potentials $W$ in the case where the considered maps $u$ are divergence-free in $\Omega$ with $\omega$ being the $(d-1)$-torus and $N=d$.

\medskip

\nd {\bf Keywords.} Nonlinear elliptic PDEs; De Giorgi conjecture; Energy estimates; Geodesic distance.

\end{abstract}

\section{Introduction}

Let $N\geq 1$, $d\geq 2$ and $\Omega=\R\times \omega$ be an infinite cylinder in $\R^d$, where $\omega\subset \R^{d-1}$ is an open connected bounded set with Lipschitz boundary. For a lower semicontinuous potential $W:\R^N\to\R_+\cup\{+\infty\}$, 
we consider the functional 
\begin{equation}\label{EE}
E(u)= \int_\Omega \Big( |\nabla u|^2 + W(u) \Big) \diff x, \quad u\in\dot{H}^1(\Omega,\R^N),
\end{equation}
where $|\cdot|$ is the Euclidean norm 
and 
\begin{equation*}
\dot{H}^1(\Omega,\R^N)=\left\{u\in H^1_{loc}(\Omega,\R^N)\;:\;\nabla u=(\partial_j u_i)_{1\leq i\leq N, 1\leq j \leq d}\in L^2(\Omega,\R^{N\times d})\right\}.
\end{equation*}
A natural problem consists in studying optimal transition layers for the functional $E$ between two wells $u^\pm$ of $W$ (i.e., $W(u^\pm)=0$). In particular, motivated by the De Giorgi conjecture, one aim is to analyse under which conditions on the potential $W$ and on the dimensions $d$ and $N$, every minimizer $u$ of $E$ connecting $u^\pm$ as $x_1\to \pm \infty$ is one-dimensional, i.e., depending only on $x_1$. Obviously, such one-dimensional transition layers $u$ coincide with 
their $x'$-average $\overline{u}:\R\to \R^N$ defined as
\begin{equation}\label{average}
\overline{u}(x_1):=\int_{\omega} \hspace{-0.4cm} - \, \, u(x_1,x')\diff x', \quad x_1\in\R,
\end{equation}
where $x'=(x_2,\dots,x_d)$ denotes the $d-1$ variables in $\omega$ and the $x'$-average symbol is denoted by $\displaystyle \int_{\omega} \hspace{-0.4cm} - \, \,=\frac1{|\omega|} \int_{\omega}$.

\subsection{Main results}

The purpose of this note is to prove a necessary condition for finite energy configurations $u$ provided that $W$ satisfies the following two conditions:
\begin{description}
\item[\bf (H1)]
$W$ has a finite number of wells, i.e., $\card(\{z\in \R^N\, :\, W(z)=0\})<\infty$;
\item[\bf (H2)]
$\liminf\limits_{|z|\to\infty} W(z)>0$.
\end{description}
More precisely, we prove that under these assumptions, there exist two wells $u^\pm$ of $W$ such that $u(x_1, \cdot)$ converges to $u^\pm$ in $L^2$ and a.e. in $\omega$ as $x_1\to \pm\infty$; in particular, the  $x'$-average $\overline{u}$ (as a continuous map in $\R$) admits the limits  
$\overline{u}(\pm \infty)=u^\pm$ as $x_1\to \pm\infty$. Here, $u(x_1,\cdot)$ stands for the trace of the Sobolev map $u\in\dot{H}^1(\Omega,\R^N)$ on the section $\{x_1\}\times \omega$ for every $x_1\in \R$.

\medskip

\begin{theorem}\label{thm1}
Let $\Omega=\R\times \omega$, where $\omega\subset \R^{d-1}$ is an open connected bounded set with Lipschitz boundary. 
If $W:\R^N\to \R_+\cup\{+\infty\}$ is a lower semicontinuous potential satisfying {\bf (H1)} and {\bf (H2)}, then every $u \in\dot{H}^1(\Omega,\R^N)$ with $E(u)<\infty$ connects two wells\footnote{$u^-$ and $u^+$ could be equal.} $u^\pm\in\R^N$ of $W$ at $x_1=\pm \infty$ (i.e., $W(u^\pm)=0$) in the sense that
\begin{equation}\label{BC}
\lim_{x_1\to \pm \infty}\|u(x_1,\cdot)-u^\pm\|_{L^2(\omega,\R^N)}=0\quad\text{and}\quad
\lim_{x_1\to \pm \infty} u(x_1, \cdot)=u^\pm\quad\text{a.e. in } \omega.
\end{equation}
In particular,
$$
\lim\limits_{x_1\to \pm \infty} \int_{\omega} \hspace{-0.4cm} - \, \, u(x_1, x')\, \diff x'=u^\pm.
$$
\end{theorem}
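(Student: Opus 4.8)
The plan is to exploit the finiteness of the energy by a Fubini-type slicing argument in the $x_1$-direction, combined with a trace estimate to upgrade one-dimensional control into control on full sections. First, since $E(u)<\infty$, the function $f(x_1):=\int_\omega\big(|\nabla u(x_1,x')|^2+W(u(x_1,x'))\big)\diff x'$ is integrable on $\R$, so there is a sequence $t_n\to+\infty$ with $f(t_n)\to 0$. On such sections the Dirichlet energy of the trace is small, so $u(t_n,\cdot)$ is almost constant in $\omega$ (by Poincaré–Wirtinger relative to the connected Lipschitz domain $\omega$), and $\int_\omega W(u(t_n,\cdot))\to 0$. Using \textbf{(H2)}, a tail bound shows $u(t_n,\cdot)$ stays in a bounded set for large $n$, and by lower semicontinuity of $W$ together with the vanishing of $\int_\omega W(u(t_n,\cdot))$ and the near-constancy, any subsequential $L^2$-limit of $u(t_n,\cdot)$ must be a single well; by \textbf{(H1)} there are only finitely many wells, so along a further subsequence $u(t_n,\cdot)\to u^+$ in $L^2(\omega)$ for one fixed well $u^+$.

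The core step is to promote convergence along the sequence $t_n$ to convergence as $x_1\to+\infty$. For this I would control oscillation between two far-out sections $s<t$ using the fundamental theorem of calculus in $x_1$: for a.e. $x'$, $u(t,x')-u(s,x')=\int_s^t\partial_1 u(\tau,x')\,\diff\tau$, hence by Cauchy–Schwarz $\|u(t,\cdot)-u(s,\cdot)\|_{L^2(\omega)}\le (t-s)^{1/2}\big(\int_s^t\int_\omega|\partial_1 u|^2\big)^{1/2}$. This is not directly good enough because of the factor $(t-s)^{1/2}$, so instead I would argue by contradiction: if $u(x_1,\cdot)\not\to u^+$ in $L^2(\omega)$, there is $\delta>0$ and $s_k\to+\infty$ with $\|u(s_k,\cdot)-u^+\|_{L^2(\omega)}\ge\delta$. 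Interleaving the $s_k$ with the good sections $t_n$ and using continuity of $x_1\mapsto u(x_1,\cdot)\in L^2(\omega)$ (which holds since $u\in\dot H^1$, so $x_1\mapsto u(x_1,\cdot)$ is in $H^1_{loc}(\R;L^2(\omega))\subset C(\R;L^2(\omega))$), one produces, on each of infinitely many disjoint $x_1$-intervals $I_j$ of length bounded below, a configuration whose section energy $f$ is bounded below by a positive constant — either because $u$ is far from every well on a set of positive measure (costing $W$-energy via \textbf{(H2)} and the finiteness of wells, which gives a positive lower bound for $W$ outside small neighborhoods of the wells once we are in a bounded region) or because $u$ transitions between being close to $u^+$ and being far from $u^+$ (costing Dirichlet energy). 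Summing over the infinitely many disjoint intervals contradicts $\int_\R f<\infty$.

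To make the dichotomy precise I would fix a small radius $r>0$ so that the balls $B_r(a)$ around the wells $a\in\{W=0\}$ are pairwise disjoint, and on the bounded region $\{|z|\le R\}\setminus\bigcup_a B_r(a)$ one has $W\ge c(r)>0$ by lower semicontinuity; combined with the tail estimate from \textbf{(H2)}, this gives: on any section where $f$ is small, $u(x_1,\cdot)$ lies in $B_r(a)$ for a single well $a$ on all of $\omega$ except a set of measure controlled by $f(x_1)$. Then the only way to move from "near $u^+$ on most of $\omega$" to "far from $u^+$ on a non-negligible part of $\omega$" is to pay a fixed amount of energy, which is the standard transition-layer lower bound and can be quantified by a direct slicing estimate. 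Finally, the a.e. convergence follows from the $L^2$ convergence along with the fact that $u(x_1,\cdot)$ has an $L^2$-limit: extract from any sequence $x_1^{(m)}\to+\infty$ a subsequence along which $u(x_1^{(m)},\cdot)\to u^+$ a.e.; since the limit is the same constant $u^+$ for every subsequence, $u(x_1,\cdot)\to u^+$ a.e. in $\omega$. The symmetric argument at $-\infty$ gives $u^-$, and the convergence of the average $\overline u(x_1)$ is then immediate from Jensen's inequality. I expect the main obstacle to be the quantitative transition-layer lower bound in the contradiction argument: one must ensure the "length bounded below" of the bad intervals $I_j$ and the positive lower bound of the section energy on them are uniform in $j$, which requires carefully combining the continuity in $L^2(\omega)$, the tail control from \textbf{(H2)}, and the positivity of $W$ away from its finitely many zeros.
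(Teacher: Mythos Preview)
Your route to the $L^2$-convergence is essentially the paper's (Section~3): continuity of $x_1\mapsto u(x_1,\cdot)\in L^2(\omega)$, a subsequence $t_n\to+\infty$ with section energy tending to zero whose $L^2$-limit is identified as a well $u^+$, and then a ring-crossing contradiction. Your ``Dirichlet vs.\ potential'' dichotomy is correct in spirit, but note that you do \emph{not} get intervals that are simultaneously of length bounded below \emph{and} with $f$ bounded below; you get one or the other. The paper packages this in one line via the weighted length $\int\sqrt{e(u(t,\cdot))}\,\|\partial_1 u(t,\cdot)\|_{L^2}\,dt$: on each crossing of $B_{L^2}(u^+,\tfrac{3\varepsilon}{4})\setminus B_{L^2}(u^+,\tfrac{\varepsilon}{4})$ the $L^2$-length is at least $\varepsilon/2$ and $e\ge k_{\varepsilon/4}>0$ (this is the $L^2$-level version of your claim that small section energy forces $L^2$-proximity to a well), while Young's inequality bounds the same integral above by the tail of $E(u)$, which vanishes.

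Your argument for the almost-everywhere convergence, however, has a genuine gap. You invoke a subsubsequence principle: every sequence $x_1^{(m)}\to+\infty$ admits a further subsequence along which $u(x_1^{(m)},\cdot)\to u^+$ a.e., and since the limit is always the same constant, the full family converges a.e. This principle is valid for metric (e.g.\ $L^2$) convergence but \emph{fails} for a.e.\ convergence, which is not induced by a topology: the null exceptional set depends on the chosen subsequence, and uncountably many such null sets need not have null union. A typewriter-type construction shows that $L^2$-convergence of $u(x_1,\cdot)$ to $u^+$ as $x_1\to+\infty$ does not by itself force $u(x_1,x')\to u^+$ for a.e.\ fixed $x'$. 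The paper repairs this by slicing in $x'$ instead: by Fubini, for a.e.\ $x'\in\omega$ the one-dimensional curve $t\mapsto u(t,x')$ satisfies $\int_\R\big(|\partial_1 u(t,x')|^2+W(u(t,x'))\big)\,dt<\infty$, and the \emph{same} heteroclinic argument, now run in $\R^N$ rather than in $L^2(\omega)$, produces a genuine limit $u^+(x')\in\{W=0\}$ as $t\to+\infty$. One then identifies $u^+(x')\equiv u^+$ by comparing with a single a.e.-convergent subsequence extracted from the already-established $L^2$-limit.
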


\begin{remark}
i) As a consequence of the Poincar\'e-Wirtinger inequality\footnote{The assumption that $\omega$ is connected with Lipschitz boundary is needed for the Poincar\'e-Wirtinger inequality.}, for $u\in \dot{H}^1(\Omega, \R^N)$ with $\bar u(\pm \infty)=u^\pm$, there exist two sequences $(R_n^+)_{n\in\N}$ and $(R_n^-)_{n\in\N}$ such that $(R_n^\pm)_{n\in\N}\to\pm\infty$ and
\be
\label{poincare_wirtinger}
\| u(R_n^\pm,\cdot)- u^\pm\|_{H^1(\omega,\R^N)} \underset{n\to\infty}{\longrightarrow}0
\ee
(see \cite[Lemma~3.2]{Ignat-Monteil}). 

\medskip

ii) Theorem \ref{thm1} also holds true if $\omega$ is a closed (i.e., compact, connected without boundary) Riemannian manifold.

\medskip

iii) Theorem \ref{thm1} also applies for maps $u$ taking values into a closed set $\mathcal{N}\subset \R^N$ (e.g., $\mathcal{N}$ could be a compact manifold embedded in $\R^N$). More precisely, if the potential \(W:\R^N\to \R_+\cup\{+\infty\}\) satisfies {\bf (H1)}, {\bf (H2)} 
and \(\mathcal{N}:=\{z\in \R^N\, :\, W(z)<+\infty\}\) is a closed set such that \(W_{\vert \mathcal{N}}:\mathcal{N}\to\R_+\) is lower semicontinuous, then Theorem \ref{thm1} handles the case where the nonlinear constraint \(u\in\mathcal{N}\) is present.
\end{remark}

\medskip

The result in Theorem \ref{thm1} extends to slightly more general potentials $W$ in the following context of divergence-free maps. For that, let $d=N$ and $\Omega=\R\times \omega$ with $\omega=\mathbb{T}^{d-1}$ and $\T=\R/\Z$ being the flat torus.
We consider maps $u\in H^1_{loc}(\Omega, \R^d)$ periodic in $x'\in \omega$ and divergence-free, i.e., 
$$\nabla\cdot u=0 \quad \textrm{in} \quad \Omega.$$
Then the $x'$-average $\bar u:\R\to \R^d$ is continuous and its first component is constant, i.e., there is $a\in \R$ such that
$$\bar u_1(x_1)=a \quad \textrm{for every} \quad x_1\in \R$$
(see \cite[Lemma 3.1]{Ignat-Monteil}). For such maps $u$, we consider potentials $W$ satisfying the following two conditions:
\begin{description}
\item[$\textrm{\bf (H1)}_a$]
$W(a, \cdot)$ has a finite number of wells, i.e., $\card(\{z'\in \R^{d-1}\, :\, W(a,z')=0\})<\infty$;
\item[$\textrm{\bf (H2)}_a$]
$\liminf\limits_{z_1\to a, \, |z'|\to\infty} W(z_1, z')>0$.
\end{description}
In this context, we have proved in our previous paper \cite{Ignat-Monteil} that the $x'$-average map $\bar u$ admits limits $u^\pm$ as $x_1\to \pm \infty$, where $u^\pm_1=a$ and they are two wells of $W(a, \cdot)$, see \cite[Lemma 3.7]{Ignat-Monteil}. As in Theorem \ref{thm1}, we will prove that $u(x_1, \cdot)$ converges to $u^\pm$ in $L^2$ and a.e. in $\omega$ as $x_1\to \pm\infty$.

\medskip

\begin{theorem}\label{thm2}
Let $\Omega=\R\times \omega$ with $\omega=\T^{d-1}$ the $(d-1)$-dimensional torus and $u\in H^1_{loc}(\Omega, \R^d)$ such that $E(u)<\infty$ and $\bar u_1=a$ in $\R$ for some $a\in \R$. 
If $W:\R^d\to \R_+\cup\{+\infty\}$ is a lower semicontinuous potential satisfying $\textrm{\bf (H1)}_a$ and 
$\textrm{\bf (H2)}_a$, then there exist two wells $u^\pm\in\R^d$ of $W$ such that \eqref{BC}
holds true and $u^\pm_1=a$. In particular, $\bar u(\pm\infty)=u^\pm$.
\end{theorem}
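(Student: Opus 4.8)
The plan is to reduce Theorem~\ref{thm2} to the convergence of the averages, $\overline{u}(x_1)\to u^\pm$ as $x_1\to\pm\infty$ with $u^\pm_1=a$ and $W(u^\pm)=0$, which is exactly \cite[Lemma~3.7]{Ignat-Monteil}; this is the only step where the hypotheses $\textrm{\bf (H1)}_a$, $\textrm{\bf (H2)}_a$ on $W$ and the divergence-free constraint (through $\overline{u}_1\equiv a$) are used. What remains is then a soft argument, word-for-word parallel to the one for Theorem~\ref{thm1}, upgrading this to convergence of the full sections $u(x_1,\cdot)$. The mechanism is that the fluctuation
$$g(x_1,x'):=u(x_1,x')-\overline{u}(x_1)$$
actually belongs to $H^1(\Omega,\R^d)$ and not merely to $H^1_{loc}(\Omega,\R^d)$, and an $H^1$ map on the infinite cylinder $\Omega=\R\times\T^{d-1}$ has sections tending to $0$ both in $L^2(\omega)$ and pointwise a.e.\ as $x_1\to\pm\infty$.

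Concretely, I would first record that $\overline{u}\in H^1_{loc}(\R,\R^d)$ (hence $g\in H^1_{loc}(\Omega,\R^d)$) and, citing \cite[Lemma~3.7]{Ignat-Monteil}, that $\overline{u}(\pm\infty)=u^\pm$ exist with $u^\pm_1=a$ and $W(u^\pm)=0$. Then comes the key estimate on $\|g\|_{H^1(\Omega)}$. Since the $x_1$-derivative of $\overline{u}$ is the $x'$-average of $\partial_1 u$, the variance identity on $\omega$ gives, for a.e.\ $x_1$,
$$\int_\omega|\nabla g(x_1,\cdot)|^2\diff x'=\int_\omega|\nabla u(x_1,\cdot)|^2\diff x'-|\omega|\,\abs{\overline{u}'(x_1)}^2\le\int_\omega|\nabla u(x_1,\cdot)|^2\diff x',$$
while the Poincar\'e--Wirtinger inequality on the closed manifold $\omega=\T^{d-1}$ yields $\int_\omega\abs{g(x_1,\cdot)}^2\diff x'\le C_P\int_\omega\abs{\nabla' u(x_1,\cdot)}^2\diff x'$. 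Integrating both inequalities in $x_1$ and using $E(u)<\infty$ gives $\nabla g\in L^2(\Omega)$ and $g\in L^2(\Omega)$, i.e.\ $g\in H^1(\Omega,\R^d)$.

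It then remains to slice. Viewing $g\in H^1(\R;L^2(\omega))\hookrightarrow C_0(\R;L^2(\omega))$ gives $\|g(x_1,\cdot)\|_{L^2(\omega)}\to 0$ as $x_1\to\pm\infty$; combined with $\overline{u}(x_1)\to u^\pm$ this is the first limit in \eqref{BC}. Likewise, by the slicing (Fubini) property for Sobolev functions, for a.e.\ $x'\in\omega$ the slice $g(\cdot,x')$ lies in $H^1(\R,\R^d)\hookrightarrow C_0(\R,\R^d)$, so $g(x_1,x')\to 0$ as $x_1\to\pm\infty$; adding back $\overline{u}(x_1)\to u^\pm$ gives $u(x_1,x')\to u^\pm$ for a.e.\ $x'$, which is the second limit in \eqref{BC}. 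The remaining assertions $u^\pm_1=a$ and $\overline{u}(\pm\infty)=u^\pm$ were already recorded in the first step.

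I expect the only delicate points to be technical bookkeeping rather than new ideas: matching the Sobolev slicing traces $g(\cdot,x')$ (and $g(x_1,\cdot)$) with the pointwise section traces $u(x_1,\cdot)$ that appear in the statement, and checking that everything goes through on the cylinder over the torus — a uniform Poincar\'e--Wirtinger constant on $\T^{d-1}$, and the validity of the embeddings $H^1(\Omega)\hookrightarrow C_0(\R;L^2(\omega))$ and ``a.e.\ slice in $H^1(\R)$''. No information on $W$ beyond \cite[Lemma~3.7]{Ignat-Monteil} enters the upgrade, which is precisely why Theorem~\ref{thm2} admits a proof parallel to that of Theorem~\ref{thm1}.
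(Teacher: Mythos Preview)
Your proof is correct and takes a genuinely different route from the paper's. The paper does not invoke \cite[Lemma~3.7]{Ignat-Monteil} and then upgrade; instead it reproves the $L^2$ convergence directly by the ring-crossing argument of Section~\ref{sec3}, restricted to the affine slice $L^2_a(\omega,\R^d)=\{v\in L^2:\int_\omega v_1=a\}$ with the sliced energy $e_a$ (this is where $\textrm{\bf (H1)}_a$ and $\textrm{\bf (H2)}_a$ enter), and it obtains the a.e.\ convergence by introducing the auxiliary potential $W_a(z):=W(z)+4\pi^2\abs{z_1-a}^2$---which satisfies the full {\bf (H1)} and {\bf (H2)}---so that Lemma~\ref{closed_boundary} applies slice-by-slice in~$x'$. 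Your observation that the fluctuation $g=u-\overline{u}$ actually lies in $H^1(\Omega)$, using only $\nabla u\in L^2(\Omega)$ and Poincar\'e--Wirtinger and with no further role for $W$, is more elementary and cleanly separates the single step where the potential matters (the average limit) from a purely Sobolev upgrade via $H^1(\R;L^2(\omega))\hookrightarrow C_0(\R;L^2(\omega))$ and $H^1(\R)\hookrightarrow C_0(\R)$; the paper's argument keeps $W$ in play throughout but is self-contained, since its $L^2$ step already delivers the average convergence without the external citation. One small remark: the paper's proof of Theorem~\ref{thm1} also proceeds via the energy/geodesic mechanism rather than by isolating $g\in H^1(\Omega)$, so your upgrade is not literally parallel to it---in fact your fluctuation argument would equally give an alternative, shorter proof of the $L^2$ and a.e.\ conclusions of Theorem~\ref{thm1}.
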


Note that we don't assume that $u$ is divergence-free in Theorem \ref{thm2}, only the assumption that
$\bar u_1$ is constant.

\subsection{Motivation}

Our main result is motivated by the well-known De Giorgi conjecture that consists in investigating 
the one-dimensional symmetry of critical points of the functional $E$, i.e., solutions $u:\Omega\to\R^N$ to the nonlinear elliptic system
\begin{equation}
\label{sys}
\begin{cases}
\Delta u=\frac12 \nabla W(u) \quad & \text{in} \quad \Omega,\\
\frac{\partial u}{\partial \nu}=0 \quad & \text{on} \quad \partial\Omega=\R\times \partial \omega,
\end{cases}
\end{equation}
where $W$ is assumed to be locally Lipschitz in \eqref{sys} and $\nu$ is the unit outer normal vector field at $\partial \omega$.
Theorem \ref{thm1} states in particular that solutions $u$ of finite energy  satisfy the boundary condition \eqref{BC} for two wells $u^\pm$ of $W$.
A natural question related to the De Giorgi conjecture arises in this context:

\medskip

\noindent {\bf Question}: Under which assumptions on the potential $W$ and the dimensions $d$ and $N$,
is it true that every global minimizer $u$ of $E$ connecting two wells\footnote{We say that $u$ connects two wells $u^\pm$ of $W$ if \eqref{BC} is satisfied.} of $W$  is one-dimensional symmetric, i.e., $u=u(x_1)$ ?

\medskip

\noindent {\it Link with the Gibbons and De Giorgi conjectures.} i) In the scalar case $N=1$ ($d$ is arbitrary) and $W(u)=\frac12(1-u^2)^2$, the answer to the above question is positive provided that the limits \eqref{BC} are replaced by uniform convergence (see \cite{Carbou,F1}); within these uniform boundary conditions, the problem is called the Gibbons conjecture. We mention that many articles have been written on Gibbons' conjecture in the case of the entire space $\Omega=\R^d$: more precisely, if a solution\footnote{Here, $u$ needs not be a global minimizer of $E$ within the boundary condition \eqref{BC}, nor monotone in $x_1$, i.e., $\partial_1 u>0$. Obviously, this result applies also to global minimizers, as $|u|\le 1$ in $\R^d$ by the maximum principle.} $u:\R^d\to \R$ of the PDE 
\be
\label{scalarPDE}
\Delta u=\frac12\frac{dW}{du}(u) \quad \textrm{ in }\quad \R^d
\ee
satisfies the convergence $\lim_{x_1\to\pm\infty}u(x_1,x')=\pm 1$ uniformly in $x'\in \R^{d-1}$ and \(|u|\le 1\) in $\R^d$,
then $u$ is one-dimensional (see \cite{BBG,BHM,CafCor,F2}).

Let us now speak about the long standing De Giorgi conjecture in the scalar case $N=1$. It predicts that any bounded solution $u$ of \eqref{scalarPDE} that is monotone in the $x_1$ variable is one-dimensional in dimension $d\leq 8$, i.e., 
the level sets $\{u=\lambda\}$ of $u$ are hyperplanes. The conjecture has been solved in dimension $d=2$ by Ghoussoub-Gui \cite{Ghoussoub:1998}, using a Liouville-type theorem and monotonicity formulas. Using similar techniques, Ambrosio-Cabr\'e \cite{AmbrosioCabre:2000} extended these results to dimension $d=3$, while Ghoussoub-Gui \cite{ghoussoub2003giorgi} showed that the conjecture is true for $d=4$ and $d=5$ under some antisymmetry condition on $u$. The conjecture was finally proved by Savin  \cite{Savin:2009} in dimension $d\leq 8$ under the additional condition $\lim_{x_1\to\pm\infty}u(x_1,x')=\pm 1$ pointwise in $x'\in\R^{d-1}$, the proof being based on fine regularity results on the level sets of $u$. Lately, Del Pino-Kowalczyk-Wei  \cite{del2011giorgi} gave a counterexample to the De Giorgi conjecture in dimension $d\geq 9$, which satisfies the pointwise limit conditions $\lim_{x_1\to\pm\infty}u(x_1,x')=\pm1$ for a.e. \(x'\in\R^{d-1}\). It would be interesting to investigate whether these results transfer (or not) to the context of the strip $\Omega=\R\times \omega$ as stated in Question. Theorem \ref{thm1} proves that the pointwise convergence as $x_1\to \pm \infty$ is a necessary condition in the context of a strip $\R\times \omega$ and for finite energy configurations.

\medskip

\noindent ii) Less results are available for the vector-valued case $N\geq 2$. In the case $\Omega=\R^d$, $N=2$ and 
$W(u_1, u_2)=\frac12(u_1^2-1)^2+\frac12(u_2^2-1)^2+{\Lambda}u_1^2 u_2^2-\frac12$ with $\Lambda\geq 1$
(so $W\geq 0$ and $W$ has exactly four wells $\{(0, \pm 1), (\pm 1, 0)\}$, thus, {\rm \bf (H1)} and {\rm \bf (H2)} are satisfied), the Gibbons and De Giorgi conjectures corresponding to the system \eqref{sys} are discussed in \cite{FSS}. Several other phase separation models (e.g., arising in a binary mixture of Bose-Einstein condensates) are studied in the vectorial case where $W$ has a non-discrete set of zeros (see e.g., \cite{berestycki2013phase, berestycki2013entire, fazly2013giorgi}). 

\medskip

We recall that in the study of the De Giorgi conjecture for \eqref{scalarPDE}, i.e., $N=1$, there is a link between monotonicity of solutions (e.g., the condition $\partial_1 u>0$), stability (i.e., the second variation of the corresponding energy at $u$ is nonnegative), and local minimality of $u$ (in the sense that the energy does not decrease under compactly supported perturbations of $u$). We refer to \cite[Section~4]{Alberti:2001} for a fine study of these properties. In particular, it is shown that the monotonicity condition in the De Giorgi conjecture implies that $u$ is a local minimizer of the energy (see \cite[Theorem~4.4]{Alberti:2001}). Therefore, it is natural to study Question under the monotonicity condition in $x_1$ (instead of the global minimality condition on $u$).

\bigskip

\noindent {\it Link with micromagnetic models.}
We have studied Question in the context of divergence-free maps $u:\R\times \omega\to \R^N$ where $d=N$ and $\omega=\mathbb{T}^{d-1}$ is the $(d-1)$-dimensional torus, see \cite{Ignat-Monteil}. By developing a theory of calibrations, we have succeeded to give sufficient conditions on the potential  $W$ in order that the answer to Question is positive, in particular in the case where $\textrm{\bf (H1)}_a$  and $\textrm{\bf (H2)}_a$  are satisfied, see \cite[Theorem 2.11]{Ignat-Monteil}. In that context, Question is related to some reduced model in micromagnetics in the regime where the so-called stray-field energy is strongly penalized favoring the divergence constraint $\nabla \cdot u=0$ of the magnetization $u$ (the unit-length constraint on $u$ being relaxed in the system). In the theory of micromagnetics, a challenging question concerns the symmetry of domain walls. Indeed, much effort has been devoted lately to identifying on the one hand, the domain walls that have one-dimensional symmetry, such as the so-called symmetric N\'eel and symmetric Bloch walls (see e.g. \cite{DKO, IO, Ignat:2011}), and on the other hand, the domain walls involving microstructures, such as the so-called cross-tie walls (see e.g., \cite{Alouges:2002,Riviere:2001}), the zigzag walls (see e.g., \cite{Ignat:2012, Moser_zigzag}) or the asymmetric N\'eel / Bloch walls 
(see e.g. \cite{Doring:2013, DI}). Thus, answering to Question would give a general approach in identifying the anisotropy potentials $W$ for which the domain walls are one-dimensional in the elliptic system \eqref{sys}.

\bigskip
\noindent {\it Link with heteroclinic connections.} One dimensional \footnote{If \(u=u(x_1)\), the Neumann condition \(\frac{\partial u}{\partial \nu}=0\) is automatically satisfied.} solutions \(u=u(x_1)\) of the system \eqref{sys} are called heteroclinic connections. Given two wells \(u^\pm\) of a potential \(W\) satisfying {\rm\bf (H1)} and {\rm\bf (H2)}, it is known that there exists a heteroclinic connection \(\gamma:\R\to\R^N\) obtained by minimizing 
\(\int_\R \abs{\frac{d}{dx_1}\gamma}^2+W(\gamma)\, dx_1\) under the condition \(\gamma(\pm\infty)=u^\pm\) (see \cite{MonteilSantambrogio2016,Sourdis:2016,Zuniga:2016}). In the vectorial case \(N\ge 2\), this connection may not be unique in the sense that there could exist two (minimizing) heteroclinic connections \(\gamma_1,\gamma_2\) such that \(\gamma_i(\pm\infty)=u^\pm\) for \(i=1,2\) but  \(\gamma_1(\cdot)\) and \(\gamma_2(\cdot-\tau)\) are distinct for every \(\tau\in\R\). If this is the case, at least in dimension \(d=2\) and $\Omega=\R^2$, there also exists a solution \(u\) to \(\Delta u=\frac 12 \nabla W(u)\) which realizes an interpolation between \(\gamma_1\) and \(\gamma_2\) in the following sense (see \cite{Schatzman:2002,Alama:1997,MonteilSantambrogio2017}):
\[
\begin{cases}
u(x_1,x_2)\to u^\pm&\text{as \(x_1\to\pm\infty\) uniformly in \(x_2\),}\\
u(x_1,x_2)\to \gamma_1(x_1)&\text{as \(x_2\to -\infty\) uniformly in \(x_1\),}\\
u(x_1,x_2)\to \gamma_2(x_1)&\text{as \(x_2\to +\infty\) uniformly in \(x_1\).}
\end{cases}
\]
Moreover, this solution is energy local minimizing, i.e., the energy cannot decrease by compactly supported perturbations of $u$. Solutions to the system \(\Delta u=\frac 12 \nabla W(u)\) naturally arise when looking at the local behavior of a transition layer near a point at the interface between two wells \(u^\pm\) ; solutions satisfying the preceding boundary conditions correspond to the case of an interface point where the 1D connection passes from \(\gamma_1\) to \(\gamma_2\). The existence of such stable entire solutions to the Allen-Cahn system makes a significative difference with the scalar case, i.e. \(N=1\), where only 1D solutions are present by the De Giorgi conjecture.

\section{Pointwise convergence and convergence of the $x'$-average}
\label{sec:2}

In this section we prove that under the assumptions in Theorem \ref{thm1},
the $x'$-average $\overline{u}$ (as a continuous map in $\R$) has limits $\overline{u}(\pm \infty)=u^\pm$ as $x_1\to \pm \infty$ corresponding to two wells of $W$. For that, we will follow the strategy that we developed in our previous paper (see \cite[Section 3.1]{Ignat-Monteil}). The idea consists 
in introducing an ``averaged" potential \(V\) in $\R^N$ with $W\geq V\geq 0$ and $\{V=0\}=\{W=0\}$ (see Lemma \ref{lemmaV}), and a new functional $E_V$ associated to the  $x'$-average $\overline{u}$ of a map $u$ such that $\frac1{|\omega|}E(u)\geq E_V(\bar u)$. This can be seen as a dimension reduction technique since the new map $\bar u$ has only one variable. We will prove that every transition layer $\bar u$ connecting two wells $u^\pm$ has the energy $E_V(\bar u)$ bounded from below by the geodesic pseudo-distance $\mathrm{geod}_V$ between the wells $u^\pm$
(see Lemma \ref{regularization_lemma}). As the Euclidean distance in $\R^N$ is absolutely continuous with respect to $\mathrm{geod}_V$ (see Lemma \ref{cWgeqGeod}), we will conclude that $\bar u$ admits limits at $\pm \infty$ given by two wells of $W$ (see Lemma \ref{closed_boundary}). Note that in Section \ref{sec3}, we will give a second proof of the claim $\bar u(\pm\infty)=u^\pm$ without using the geodesic pseudo-distance $\mathrm{geod}_V$.

We first introduce the energy functional $E$ (defined in \eqref{EE}) restricted to appropriate subsets $A\subset\Omega$ (e.g., $A$ can be a subset of the form $I\times \omega$ for an interval $I\subset \R$, or a section $\{x_1\}\times \omega$): for every map $u\in \dot{H}^1(A,\R^N)$, we set
\[
E(u,A):= 
 \int_A |\nabla u|^2 + W(u) \diff x,
\]
so that for $A=\Omega$, we have $E(u)=E(u, A)$. For any interval $I\subset\R$, the Jensen inequality yields
\[
E(u,I\times \omega)= \int_{I}\int_{\omega}\left( |\partial_{1}u|^2+|\nabla' u|^2+W(u)\right)\diff x'\diff x_1\geq 
 |\omega| \int_I \Big|\frac{\diff}{\diff x_1}\overline{u}(x_1)\Big|^2+e(u(x_1,\cdot))\diff x_1,
\]
where $\nabla'=(\partial_2, \dots, \partial_d)$, $\bar u$ is the $x'$-average of $u$ given in \eqref{average} and the $x'$-average energy $e$ is defined by 
$$e(v):= \int_{\omega} \hspace{-0.4cm} - \, \,\left( |\nabla' v|^2+W(v)\right)\diff x' \quad \textrm{ for all }\, v\in H^1(\omega,\R^N).$$ 
Introducing the averaged potential $V:\R^N\to\R_+\cup\{+\infty\}$ defined for all $z\in\R^N$ by 
\be
\label{defV}
V(z):=\inf\left\{ e(v)\;:\; v\in H^1(\omega,\R^N),\, \int_{\omega} \hspace{-0.4cm} - \, \, v\diff x'=z \right\}\geq 0,
\ee
we have
\be
\label{inegEV}
E(u,I\times \omega)\geq |\omega| \int_I \left(\Big|\frac{\diff}{\diff x_1}\overline{u}(x_1)\Big|^2+V(\overline{u}(x_1))\right)\diff x_1.
\ee
This observation is the starting point in the proof of the following lemma:
\begin{lemma}\label{lemmaV}
Let $W:\R^N\to\R_+\cup\{+\infty\}$ be a lower semicontinuous function satisfying {\rm \bf(H2)}. Then 
the averaged potential $V:\R^N\to\R_+\cup\{+\infty\}$ defined in \eqref{defV} satisfies the following:
\begin{enumerate}
\item
$V$ is lower semicontinuous in $\R^N$,
\item
for all $z\in\R^N$, $V(z)\leq W(z)$, the infimum in \eqref{defV} is achieved and\footnote{In particular, if $W$ satisfies {\rm \bf(H1)}, then $V$ satisfies {\rm \bf(H1)}, too.} 
$\Big[V(z)=0\Leftrightarrow W(z)=0\Big]$,
\item
$V_\infty:=\liminf\limits_{|z|\to\infty}V(z)>0$,
\item
for every interval $I\subset\R$ and for every $u\in \dot{H}^1(I\times\omega,\R^N)$, one has
\[
\frac1{|\omega|}E(u,I\times \omega)\geq  E_V(\overline{u},I), \quad E_V(\overline{u},I) :=\int_I \Big|\frac{\diff}{\diff x_1}\overline{u}(x_1)\Big|^2+V(\overline{u}(x_1))\diff x_1.
\]
\end{enumerate}
\end{lemma}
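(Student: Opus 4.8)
\emph{Plan.} I would treat the four items essentially independently. Item~(4) needs nothing new: it is precisely inequality~\eqref{inegEV}, obtained above by Jensen's inequality on each slice $\{x_1\}\times\omega$ together with the very definition of $V$ (since $u(x_1,\cdot)$ has $x'$-average $\overline u(x_1)$, one has $e(u(x_1,\cdot))\ge V(\overline u(x_1))$). Items~(1) and~(2) I would prove by the direct method of the calculus of variations; the one ingredient that makes this work on the bounded connected Lipschitz set $\omega$ is the Poincar\'e--Wirtinger inequality $\|v-\tfrac1{|\omega|}\int_\omega v\|_{L^2(\omega)}\le C_\omega\|\nabla' v\|_{L^2(\omega)}$, which, given a fixed (or merely bounded) average, turns a bound on the Dirichlet energy into a bound in $H^1(\omega,\R^N)$. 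Lower semicontinuity of $v\mapsto e(v)$ along a weakly $H^1$-convergent sequence (which by Rellich converges strongly in $L^2$ and a.e.) then follows from weak lower semicontinuity of the Dirichlet term together with Fatou's lemma and the lower semicontinuity of $W\ge 0$. Item~(3) is the only place where a genuine idea is needed, and I would argue it by contradiction using~{\bf (H2)}.

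\emph{Items~(1) and~(2).} Testing~\eqref{defV} with the constant map $v\equiv z$ gives $e(z)=W(z)$, hence $V(z)\le W(z)$. If $V(z)<+\infty$, take a minimizing sequence $(v_n)$ with $\tfrac1{|\omega|}\int_\omega v_n=z$; the Dirichlet energies are bounded, so by Poincar\'e--Wirtinger $(v_n)$ is bounded in $H^1(\omega,\R^N)$, and passing to a subsequence with $v_n\rightharpoonup v$ in $H^1$, $v_n\to v$ in $L^2$ and a.e., the constraint survives and $e(v)\le\liminf_n e(v_n)=V(z)$ by the lower semicontinuity above, so the infimum is attained (when $V(z)=+\infty$ any admissible map, e.g.\ $v\equiv z$, attains it). For the characterization of the zero set: $W(z)=0$ forces $V(z)\le W(z)=0$; conversely, if $V(z)=0$ then a minimizer $v$ has $\int_\omega\abs{\nabla' v}^2=0$ and $\int_\omega W(v)=0$, so $v$ is constant on the connected set $\omega$, hence $v\equiv z$ by the constraint, and then $W(z)=0$. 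Finally, for item~(1) I would rerun the same compactness argument with a moving constraint: given $z_n\to z$ realizing $\liminf_n V(z_n)=:\ell<+\infty$, take minimizers $v_n$ with $x'$-average $z_n$; since $(z_n)$ is bounded, Poincar\'e--Wirtinger again bounds $(v_n)$ in $H^1(\omega)$, and a subsequential limit $v$ has $x'$-average $z$ with $e(v)\le\ell$, giving $V(z)\le\ell$.

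\emph{Item~(3).} Suppose $V_\infty=0$; then there is $\abs{z_n}\to\infty$ with $V(z_n)\to 0$, and I take the corresponding minimizers $v_n$ from item~(2), so $\tfrac1{|\omega|}\int_\omega v_n=z_n$ while $\int_\omega\abs{\nabla' v_n}^2\to 0$ and $\int_\omega W(v_n)\to 0$. By~{\bf (H2)} fix $R,c>0$ with $W\ge c$ on $\{\abs{z}\ge R\}$. On one hand, the superlevel sets $A_n:=\{x'\in\omega:\abs{v_n(x')}\ge R\}$ satisfy $c\abs{A_n}\le\int_{A_n}W(v_n)\le\int_\omega W(v_n)\to 0$, so $\abs{A_n}\to 0$. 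On the other hand, Poincar\'e--Wirtinger gives $\int_\omega\abs{v_n-z_n}^2\le C_\omega\int_\omega\abs{\nabla' v_n}^2\to 0$, and since, for $n$ large, $\abs{v_n}\ge\abs{z_n}-\abs{v_n-z_n}\ge R$ off the set $\{\abs{v_n-z_n}>\abs{z_n}-R\}$, Chebyshev's inequality gives $\abs{\omega\setminus A_n}\le(\abs{z_n}-R)^{-2}\int_\omega\abs{v_n-z_n}^2\to 0$, i.e.\ $\abs{A_n}\to\abs{\omega}>0$. This contradicts $\abs{A_n}\to 0$, so $V_\infty>0$.

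\emph{Where the difficulty lies.} Everything except~(3) is routine direct method. The crux of~(3) is to combine the two facts that $v_n$ must stay $L^2$-close to its \emph{diverging} average $z_n$ (forced by the vanishing Dirichlet energy via Poincar\'e--Wirtinger) and that the region where $v_n$ is large is $L^1$-small (forced by the vanishing of $\int_\omega W(v_n)$ and~{\bf (H2)}), which become incompatible once $\abs{z_n}\to\infty$. One should also keep track of the fact that the Poincar\'e--Wirtinger constant $C_\omega$ depends only on $\omega$, so that it is uniform along all of these sequences.
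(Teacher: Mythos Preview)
Your proposal is correct. Items~(1), (2) and~(4) follow exactly the paper's route: direct method with Poincar\'e--Wirtinger compactness, weak $H^1$ lower semicontinuity of $e$ via Fatou, and the slice-wise Jensen inequality~\eqref{inegEV}.

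The one genuine difference is in item~(3). The paper writes $v_n=z_n+w_n$ with $\int_\omega w_n=0$, extracts (via Poincar\'e--Wirtinger) a subsequence with $w_n\to 0$ a.e.\ in $\omega$, and then applies {\bf (H2)} pointwise together with Fatou's lemma to get $\liminf_n\int_\omega W(z_n+w_n)>0$, a contradiction. You instead give a quantitative, subsequence-free argument: Chebyshev on $\int_\omega|v_n-z_n|^2\to 0$ forces $|\{|v_n|\ge R\}|\to|\omega|$, while $\int_\omega W(v_n)\to 0$ together with $W\ge c$ on $\{|z|\ge R\}$ forces $|\{|v_n|\ge R\}|\to 0$. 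Both arguments are short; yours is slightly more elementary (no extraction, no a.e.\ convergence, no Fatou) and makes the role of {\bf (H2)} perhaps more transparent, while the paper's version is closer in spirit to the compactness arguments used elsewhere in the paper (e.g.\ Step~3 in the proof of Lemma~\ref{lemma_e}).
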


The new energy $E_V(\bar u):=E_V(\overline{u}, \R)$ associated to the $x'$-average $\bar u$ will play an important role for proving the existence of the two limits $\bar u(\pm \infty)$.

\begin{proof}[Proof of Lemma \ref{lemmaV}] The claim {\it 4} follows from \eqref{inegEV}. We divide the rest of the proof in three steps. 

\medskip
\noindent\textsc{Step 1: proof of claim {\it 2}.} Clearly, for all $z\in\R^N$, one has $V(z)\leq e(z)=W(z)$. By the compact embedding $H^1(\omega)\hookrightarrow L^1(\omega)$, the lower semicontinuity of $W$, Fatou's lemma and the lower semicontinuity of the $L^2$ norm in the weak $L^2$-topology (see \cite{Brezis}), we deduce that $e$ is lower semicontinuous in the weak $H^1(\omega,\R^N)$-topology. Then the direct method in the calculus of variations implies that the infimum is achieved in \eqref{defV} (infimum that could be equal to $+\infty$ as $W$ can take the value $+\infty$). 

If $W(z)=0$, then $V(z)=0$ (as $0\leq V\leq W$ in $\R^N$). 
Conversely, if $V(z)=0$ with $z\in\R^N$, then a minimizer $v\in H^1(\omega,\R^N)$ in \eqref{defV} satisfies $V(z)=e(v)=0$ so that $v\equiv z$ and $W(z)=0$.

\medskip
\noindent\textsc{Step 2: $V$ is lower semicontinuous in $\R^N$.} Let $(z_n)_{n\in\N}$ be a sequence converging to $z$ in $\R^N$. We need to show that
\[
V(z)\leq\liminf_{n\to\infty}V(z_n).
\] 
Without loss of generality, one can assume that $(V(z_n))_{n\in\N}$ is a bounded sequence that converges to $\liminf_{n\to\infty} V(z_n)$. By Step 1, for each $n\in\N$, there exists $v_n\in H^1(\omega,\R^N)$ such that 
\[
 \int_{\omega} \hspace{-0.4cm} - \, \,  v_n \diff x'=z_n\quad\text{and}\quad e(v_n)=V(z_n).
\]
Since $(z_n)_{n\in\N}$ and $(e(v_n))_{n\in\N}$ are bounded, we deduce that $(v_n)_{n\in\N}$ is bounded in $H^1(\omega,\R^N)$ by the Poincar\'e-Wirtinger inequality. Thus, up to extraction, one can assume that $(v_n)_{n\in\N}$ converges weakly in $H^1$, strongly in $L^1$ and a.e. in $\omega$ to a limit 
$v\in H^1(\omega,\R^N)$. In particular, $  \int_{\omega} \hspace{-0.45cm} - \, \,  v \diff x'=z$. Since $e$ is lower semicontinuous in weak $H^1(\omega,\R^N)$-topology (by Step 1), we conclude
\[
V(z)\leq e(v)\leq \liminf\limits_{n\to\infty} e(v_n)= \liminf\limits_{n\to\infty} V(z_n).
\]
\noindent\textsc{Step 3: proof of claim {\it 3}.} Assume by contradiction that there exists a sequence $(z_n)_{n\in\N}\subset\R^N$ such that $|z_n|\to\infty$ and $V(z_n)\to 0$ as $n\to\infty$. Then, there exists a sequence of maps $(w_n)_{n\in\N}$ in $H^1(\omega,\R^N)$ satisfying
\[
\int_{\omega}w_n(x')\diff x'=0\quad\text{for each $n\in\N$}\quad\text{and}\quad e(z_n+w_n)\underset{n\to\infty}{\longrightarrow} 0.
\]
By the Poincar\'e-Wirtinger inequality, we have that $(w_n)_{n\in\N}$ is bounded in $H^1$. Thus, up to extraction, one can assume that it converges weakly in $H^1$, strongly in $L^1$ and a.e. to a map $w\in H^1(\omega,\R^N)$. We claim that $w$ is constant since
$$
 \int_{\omega} \hspace{-0.4cm} - \, \,  | \nabla' w|^2\diff x'\le \liminf\limits_{n\to\infty} \int_{\omega} \hspace{-0.4cm} - \, \,  | \nabla' w_n|^2\diff x'\le \liminf\limits_{n\to\infty}e(z_n+w_n)=0.
$$
We deduce $w\equiv 0$ since $\int_{\omega} w=\lim_{n\to\infty}\int_{\omega} w_n=0$. Thus $w_n\to 0$ a.e and 
{\rm \bf{(H2)}} implies that for a.e.\ $x'\in \omega$,
$$\liminf_{n\to\infty}W(z_n+w_n(x'))\geq \liminf\limits_{|z|\to\infty} W(z)>0,$$
which contradicts the fact that $e(z_n+w_n)\to 0$.
\end{proof}

\bigskip

For every lower semicontinuous function $W:\R^N\to\R_+\cup\{+\infty\}$ satisfying {\rm \bf{(H1)}} and {\rm \bf{(H2)}}, we introduce the geodesic pseudo-distance $\mathrm{geod}_W$ in $\R^N$ endowed with the singular pseudo-metric $4Wg_0$, $g_0$ being the standard Euclidean metric in $\R^N$; this geodesic pseudo-distance (that can take the value $+\infty$) is defined for every $ x,y\in\R^N$ by
\begin{multline}
\label{estgeod}
\mathrm{geod}_W(x,y):=\inf\bigg\{\int^1_{ -1} 2\sqrt{W(\sigma(t))}|\dot{\sigma}|(t)\diff t\;:\; \sigma\in\mathrm{Lip}_{ploc}([-1,1],\R^N),\, \sigma( -1)=x,\,\sigma (1)=y\bigg\},
\end{multline}
where $\mathrm{Lip}_{ploc}([-1,1],\R^N)$ is the set of continuous and {\bf piecewise locally Lipschitz} curves
\footnote{In general, we cannot hope that a minimizing sequence in \eqref{estgeod} is better than piecewise locally Lipschitz because $W$ is not assumed locally bounded ($\dot \sigma$ is the derivative of $\sigma$). However, in the case of a locally bounded $W$, we could use a regularization procedure in order to restrict to Lipschitz curves $\sigma$.} 
on $[-1,1]$:
\begin{align*}
\mathrm{Lip}_{ploc}([-1,1],\R^N):=\Big\{ \sigma\in \mathcal{C}^0([-1,1],\R^N)\;:\; & \textrm{there is a partition } -1= t_1<\dots< t_{k+1}= 1,\\
& \text{with } \sigma\in \mathrm{Lip}_{loc}((t_i,t_{i+1})) \,  \textrm{ for every } 1\leq i\leq k\Big\}.
\end{align*}
By {\it pseudo-distance}, we mean that $\mathrm{geod}_W$ satisfies all the axioms of a distance; the only difference with respect to the standard definition is that a pseudo-distance can take the value $+\infty$. We will prove that $\mathrm{geod}_W$ yields a lower bound for the energy $E$ (see Lemma \ref{regularization_lemma}); this plays an important role in the proof of our claim $\overline{u}(\pm \infty)=u^\pm$. 

We start by proving some elementary facts about the pseudo-metric structure induced by $\mathrm{geod}_W$ on $\R^N$:
\medskip
 
\begin{lemma}\label{cWgeqGeod}
Let $W:\R^N\to\R_+\cup\{+\infty\}$ be a lower semicontinuous function satisfying {\rm \bf (H1)} and {\rm \bf(H2)}. Then the function $\mathrm{geod}_W:\R^N\times\R^N\to\R_+\cup\{+\infty\}$ defines a pseudo-distance over $\R^N$ and the Euclidean distance is absolutely continuous with respect to 
$\mathrm{geod}_W$, i.e., for every $\delta>0$, there exists $\varepsilon>0$ such that for every $x,y\in \R^N$ with $\mathrm{geod}_W(x,y)< \varepsilon$, we have $|x-y|< \delta$. 
\end{lemma}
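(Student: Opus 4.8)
The plan is to dispose quickly of the three ``soft'' pseudo-distance axioms and then devote the bulk of the work to the absolute continuity statement, from which the remaining (separation) axiom follows for free. Write $L_W(\sigma):=\int_{-1}^1 2\sqrt{W(\sigma(t))}\,\abs{\dot\sigma(t)}\diff t$ for the length-type functional of \eqref{estgeod}; since $W$ is lower semicontinuous it is Borel measurable, so $W\circ\sigma$ is measurable and $L_W(\sigma)\in[0,+\infty]$ is well defined for $\sigma\in\mathrm{Lip}_{ploc}([-1,1],\R^N)$. The functional $L_W$ is invariant under orientation reversal $\sigma(\cdot)\mapsto\sigma(-\,\cdot)$ and, more generally, under any increasing affine reparametrization (the Jacobian cancels between $\abs{\dot\sigma}$ and $\diff t$); this gives symmetry of $\mathrm{geod}_W$ and, testing with the constant curve, $\mathrm{geod}_W(x,x)=0$. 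For the triangle inequality, given curves $\sigma_1$ from $x$ to $y$ and $\sigma_2$ from $y$ to $z$, I reparametrize $\sigma_1$ onto $[-1,0]$ and $\sigma_2$ onto $[0,1]$ and glue at $0$: the concatenation lies in $\mathrm{Lip}_{ploc}([-1,1],\R^N)$, joins $x$ to $z$ and has $L_W$-value $L_W(\sigma_1)+L_W(\sigma_2)$, so $\mathrm{geod}_W(x,z)\le\mathrm{geod}_W(x,y)+\mathrm{geod}_W(y,z)$ after taking infima. Finally, $\mathrm{geod}_W(x,y)=0\Rightarrow x=y$ is an immediate consequence of the absolute continuity property (apply it for every $\delta>0$), so it remains only to prove that one.

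For the absolute continuity, let $\{z_1,\dots,z_k\}=\{W=0\}$, a finite set by \textbf{(H1)}, and fix $\delta>0$. The key quantitative input, combining \textbf{(H1)}, \textbf{(H2)} and lower semicontinuity, is that for every $\rho>0$ one has $m(\rho):=\inf_{\R^N\setminus U_\rho}W>0$, where $U_\rho:=\bigcup_{i=1}^k B_\rho(z_i)$: indeed, by \textbf{(H2)} there are $R_0,\mu>0$ with $W\ge\mu$ on $\{\abs z>R_0\}$, and after enlarging $R_0$ so that $U_\rho\subset B_{R_0}$ the compact set $K:=\overline{B_{R_0}}\setminus U_\rho$ contains no well, so $\min_K W>0$ by lower semicontinuity, and $m(\rho):=\min(\mu,\min_K W)$ works. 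Now I fix $\rho>0$ small enough that $7\rho<\delta$ and, when $k\ge2$, also $6\rho\le\min_{i\ne j}\abs{z_i-z_j}$, and set $\eps:=2\rho\sqrt{m(\rho)}$. Assume $\mathrm{geod}_W(x,y)<\eps$ and pick a competitor $\sigma\in\mathrm{Lip}_{ploc}([-1,1],\R^N)$ joining $x$ to $y$ with $L_W(\sigma)<\eps$. On the closed set $C:=\{t:\sigma(t)\notin U_\rho\}$ one has $W\circ\sigma\ge m(\rho)$, hence the part of $\sigma$ outside $U_\rho$ has length $\ell:=\int_C\abs{\dot\sigma}\diff t\le L_W(\sigma)/\bigl(2\sqrt{m(\rho)}\bigr)<\rho$.

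The heart of the argument is the claim that $\sigma([-1,1])$ stays in the open $\rho$-neighbourhood of $\{x\}\cup\overline{U_\rho}$, i.e.\ inside $B_\rho(x)\cup\bigcup_i B_{2\rho}(z_i)$. Fix $t$; if $\sigma(t)\in U_\rho$ this is clear, so assume $\sigma(t)\notin U_\rho$ and let $t_0:=\sup\bigl(\{-1\}\cup\{s\le t:\sigma(s)\in U_\rho\}\bigr)\le t$. By continuity $\sigma(t_0)\in\{x\}\cup\overline{U_\rho}$ (it equals $x$ if $t_0=-1$, and is a limit of points of $U_\rho$ otherwise), while $\sigma(s)\notin U_\rho$ for every $s\in(t_0,t]$, so $(t_0,t]\subset C$ and $\int_{t_0}^t\abs{\dot\sigma}\le\ell<\infty$; the elementary displacement--length bound for a continuous, piecewise locally Lipschitz curve whose derivative is integrable on a subinterval (proved by approximating the endpoints of each Lipschitz piece from the interior and passing to the limit by dominated convergence) then gives $\mathrm{dist}\bigl(\sigma(t),\{x\}\cup\overline{U_\rho}\bigr)\le\abs{\sigma(t)-\sigma(t_0)}\le\ell<\rho$. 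With the claim in hand I finish by connectedness: if $x$ lies at distance $\ge3\rho$ from every well, the $k+1$ balls $B_\rho(x),B_{2\rho}(z_1),\dots,B_{2\rho}(z_k)$ are pairwise disjoint by the choice of $\rho$, so the connected set $\sigma([-1,1])$, containing $x$, must lie in $B_\rho(x)$ and $\abs{x-y}<\rho<\delta$; otherwise $x\in B_{3\rho}(z_{i_0})$ for a unique $i_0$, hence $B_\rho(x)\subset B_{4\rho}(z_{i_0})$ and $\sigma([-1,1])\subset B_{4\rho}(z_{i_0})\cup\bigcup_{i\ne i_0}B_{2\rho}(z_i)$, again a disjoint union, so $\sigma([-1,1])\subset B_{4\rho}(z_{i_0})$ and $\abs{x-y}\le\abs{x-z_{i_0}}+\abs{z_{i_0}-y}<3\rho+4\rho<\delta$. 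This proves the absolute continuity and hence the lemma.

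I expect the main obstacle to be exactly the non-rectifiability issue just flagged: because $W$ is merely lower semicontinuous (not locally bounded) and may vanish to arbitrarily high order at a well, a curve with small $L_W$-energy can be arbitrarily long and wildly oscillatory inside the tiny balls $U_\rho$, so it cannot be controlled there directly and cannot be reparametrized by arclength. One is therefore forced to extract all metric information from the portion of $\sigma$ lying outside $U_\rho$ — where $W\ge m(\rho)>0$ yields a genuine length bound — and then to transport it across the uncontrolled parts by the purely topological connectedness argument above, which is where the isolation of the finitely many wells guaranteed by \textbf{(H1)} is indispensable (and, as expected, without it the statement is false).
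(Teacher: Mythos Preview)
Your proof is correct. It rests on the same two ingredients as the paper's---the uniform positivity of $W$ outside any fixed neighborhood of the finite well set (from \textbf{(H1)}, \textbf{(H2)} and lower semicontinuity) together with a connectedness argument---but the geometric step is organized differently. The paper argues the contrapositive: given $|x-y|\ge\delta$, it locates a single point $\sigma(t_0)$ on the curve lying outside $\Sigma_\delta=\bigsqcup_{p\in\{W=0\}}B(p,\delta/4)$ (such a point exists since no one ball of $\Sigma_\delta$ contains both $x$ and $y$), observes that $\sigma$ must then exit $B(\sigma(t_0),\delta/8)$, and uses that this ball is disjoint from $\Sigma_{\delta/2}$, so $W\ge c_\delta>0$ there; this yields $L_W(\sigma)\ge\frac{\delta}{4}\sqrt{c_\delta}$ directly. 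You instead work in the forward direction: you bound the \emph{total} arclength of $\sigma$ outside $U_\rho$ by $\rho$, deduce that the whole image is trapped in $B_\rho(x)\cup\bigcup_i B_{2\rho}(z_i)$, and finish by a connectedness argument with a case split on whether $x$ lies near a well. Both routes are sound; the paper's is a little shorter because the local ``exit a small ball'' step replaces your case analysis, while yours has the merit of making explicit the displacement--length inequality for curves in $\mathrm{Lip}_{ploc}$ (which the paper uses tacitly).
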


\begin{proof}[Proof of Lemma \ref{cWgeqGeod}] In proving that $\mathrm{geod}_W:\R^N\times\R^N\to\R_+\cup\{+\infty\}$ defines a pseudo-distance over $\R^N$, the only non-trivial axiom to check is the non-degeneracy, i.e., $\mathrm{geod}_W(x,y)>0$ whenever $x\neq y$. In fact, we prove the stronger property that for every $\delta>0$, there exists $\varepsilon>0$ such that for every $x,y\in \R^N$, $|x-y|\ge\delta$ implies $\mathrm{geod}_W(x,y)\ge\varepsilon$ which also yields the absolute continuity of the Euclidean distance with respect to 
$\mathrm{geod}_W$. For that, we recall that the set $\{W=0\}$ is finite (by {\rm \bf(H1)}); therefore, w.l.o.g. we can assume that $\delta>0$ is small enough so that the open balls $B(p,\delta/2)$, for $p\in\{W=0\}$, are disjoint. We consider the following disjoint union of balls
\[
\Sigma_\delta:=\bigsqcup_{p\in\{W=0\}} B(p, \frac\delta 4),
\]
the distance between each ball being larger than $\delta/2$. We now take two points $x,y\in\R^N$ with $|x-y|\ge \delta$. In order to obtain a lower bound on $\mathrm{geod}_W(x,y)$, we take an arbitrary continuous and piecewise locally Lipschitz curve $\sigma:[-1,1]\to\R^N$ such that $\sigma(-1)=x$ and $\sigma(1)=y$. As $\abs{x-y}\ge\delta$ (so no ball in $\Sigma_\delta$ can contain both $x$ and $y$), by connectedness, the image $\sigma([-1,1])$ cannot be contained in $\Sigma_\delta$. Thus, there exists $t_0\in [-1,1]$ with $\sigma(t_0)\notin \Sigma_\delta$. It implies that $B(\sigma(t_0),\delta/8) \cap \Sigma_{\delta/2}=\emptyset$. Moreover, since $\abs{x-y}\ge\delta$, we have either $\abs{\sigma(t_0)-x}\ge \delta/2$ or $\abs{\sigma(t_0)-y}\ge \delta/2$; w.l.o.g., we may assume that $\abs{\sigma(t_0)-y}\ge \delta/2$. Then the (continuous) curve $\sigma\big|_{[t_0,1]}$ has to get out of the ball $B(\sigma(t_0),\delta/8)$; in particular, it has length larger than $\delta/8$ and 
\[
\int_{-1}^1 2\sqrt{W(\sigma(t))}|\dot{\sigma}|(t)\diff t\ge\frac\delta{4}  \inf_{z\in B(\sigma(t_0),\delta/8)}\sqrt{W(z)}\ge \frac\delta {4}  \inf_{z\in \R^N\setminus\Sigma_{\delta/2}}\sqrt{W(z)}.
\]
Since $W$ is lower semicontinuous and bounded from below at infinity (by {\rm\bf (H2)}), we deduce that $W$ is bounded from below by a constant $c_\delta>0$ on $\R^N\setminus\Sigma_{\delta/2}$. Taking the infimum over curves $\sigma\in \mathrm{Lip}_{ploc}([-1,1],\R^N)$ connecting $x$ to $y$, we deduce from the preceding lower bound that
\[
\mathrm{geod}_W(x,y)\ge \frac{\delta\sqrt{c_\delta}}{4}>0.
\]
This finishes the proof of the result.
\end{proof}

We now use a regularization argument to derive the following lower bound on the energy:
\begin{lemma}
\label{regularization_lemma}
Let $W:\R^N\to\R_+\cup\{+\infty\}$ be a lower semicontinuous function. Then, for every interval $I\subset \R$ and every map $\sigma\in \dot{H}^1(I,\R^N)$ having limits $\sigma(\inf I)$ and $\sigma(\sup I)$ at the endpoints of $I$, we have
\be
\label{ineg_ener}
E_W(\sigma, I):=\int_I \Big(\abs{\dot\sigma(t)}^2+W(\sigma(t))\Big)\diff t\ge \mathrm{geod}_W\big(\sigma(\inf I),\sigma(\sup I)\big).
\ee
\end{lemma}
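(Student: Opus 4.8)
The plan is to prove the inequality \eqref{ineg_ener} by the classical ``completing the square'' / Bogomolny trick, combined with a reparametrization that turns an $H^1$ curve on $I$ into a fixed-domain competitor for the definition of $\mathrm{geod}_W$. First I would reduce to the case where the right-hand side is finite and, more importantly, where $W(\sigma(t))$ is locally bounded along the curve; the subtlety is that $W$ is only lower semicontinuous and may take the value $+\infty$, so $\sqrt{W(\sigma(\cdot))}$ need not be integrable a priori. If $\int_I W(\sigma)<\infty$ then $\sqrt{W(\sigma)}\,\abs{\dot\sigma}\in L^1(I)$ by Cauchy--Schwarz, so the arc-length-type integral $\int_I 2\sqrt{W(\sigma)}\,\abs{\dot\sigma}\diff t$ is well-defined and finite; if $\int_I W(\sigma)=+\infty$ then $E_W(\sigma,I)=+\infty$ and there is nothing to prove. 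So assume $\int_I W(\sigma)<\infty$.

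Next I would carry out the pointwise inequality
\[
\abs{\dot\sigma(t)}^2 + W(\sigma(t)) \ge 2\sqrt{W(\sigma(t))}\,\abs{\dot\sigma(t)} \quad \text{for a.e. } t\in I,
\]
which is just $a^2+b^2\ge 2ab$ with $a=\abs{\dot\sigma}$, $b=\sqrt{W(\sigma)}$. Integrating over $I$ gives
\[
E_W(\sigma,I) \ge \int_I 2\sqrt{W(\sigma(t))}\,\abs{\dot\sigma(t)}\diff t.
\]
It then remains to show that the right-hand side is $\ge \mathrm{geod}_W(\sigma(\inf I),\sigma(\sup I))$, i.e.\ that $\sigma$, suitably reparametrized onto $[-1,1]$, is an admissible competitor in \eqref{estgeod}, or can be approximated by such competitors. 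The geodesic-integral functional $\sigma\mapsto\int 2\sqrt{W(\sigma)}\,\abs{\dot\sigma}$ is invariant under (monotone, locally Lipschitz) reparametrization, so the natural move is: pick an affine increasing bijection $\phi:[-1,1]\to \overline I$ when $I$ is bounded, and set $\tilde\sigma=\sigma\circ\phi$. Then $\tilde\sigma\in H^1([-1,1],\R^N)$, hence is continuous, equals $x:=\sigma(\inf I)$ at $-1$ and $y:=\sigma(\sup I)$ at $1$, and has the same geodesic integral as $\sigma$. The only gap is that the definition of $\mathrm{geod}_W$ uses curves in $\mathrm{Lip}_{ploc}([-1,1],\R^N)$, not merely $H^1$; so I would either (a) observe that an $H^1$ curve can be approximated in the relevant sense by piecewise locally Lipschitz curves with no increase of the geodesic integral in the limit (reparametrize by arc-length of a mollified curve and use lower semicontinuity of $W$ and Fatou), or (b) argue that the infimum in \eqref{estgeod} does not decrease if one enlarges the admissible class from $\mathrm{Lip}_{ploc}$ to $H^1$-curves with endpoint limits — this follows by a standard density/truncation argument since one can cut off the set where $\abs{\dot{\tilde\sigma}}$ or $W(\tilde\sigma)$ is large on a small-measure set and reconnect by short locally Lipschitz arcs inside a neighbourhood of a point where $W$ is finite. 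For an unbounded interval $I$ one first restricts to a large bounded subinterval $[a,b]\subset I$, applies the bounded case to get $E_W(\sigma,[a,b])\ge \mathrm{geod}_W(\sigma(a),\sigma(b))$, and then lets $a\to\inf I$, $b\to\sup I$, using the existence of the endpoint limits of $\sigma$ together with the triangle inequality for $\mathrm{geod}_W$ and the bound $\mathrm{geod}_W(\sigma(a),\sigma(\inf I))\le \int_{(\inf I,a)}2\sqrt{W(\sigma)}\abs{\dot\sigma}\to 0$ (the tail of an $L^1$ function), and similarly at $\sup I$.

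The main obstacle I expect is precisely the regularization step, i.e.\ reconciling an $H^1$ (or merely $\dot H^1$) curve with the piecewise-locally-Lipschitz class in the definition of $\mathrm{geod}_W$, in the presence of a potential $W$ that is only lower semicontinuous and possibly infinite-valued and not locally bounded. The clean way around it, and the one I would adopt, is to mollify: given $\tilde\sigma\in H^1([-1,1])$ with $\int W(\tilde\sigma)<\infty$, consider mollifications $\tilde\sigma_\eta\to\tilde\sigma$ in $H^1$ and a.e.; these are smooth, hence locally Lipschitz, and one adjusts the endpoints by short segments (which can be taken with vanishing geodesic cost, again since $W$ is finite near $x$ and $y$ after possibly first moving $x,y$ slightly — but note $x,y$ are fixed here, so instead one keeps the endpoints exact by using a partition-of-unity correction $\tilde\sigma_\eta + (1-\chi_\eta)(\tilde\sigma-\tilde\sigma_\eta)$ supported near $\pm1$, which still converges in $H^1$ and a.e.). Then $\liminf_\eta \int 2\sqrt{W(\tilde\sigma_\eta)}\abs{\dot{\tilde\sigma}_\eta} \ge \int 2\sqrt{W(\tilde\sigma)}\abs{\dot{\tilde\sigma}}$ fails in general for lower semicontinuous $W$ in the wrong direction; what is actually needed is the reverse, $\limsup_\eta \int 2\sqrt{W(\tilde\sigma_\eta)}\abs{\dot{\tilde\sigma}_\eta}\le \int 2\sqrt{W(\tilde\sigma)}\abs{\dot{\tilde\sigma}}$, so that the $\mathrm{Lip}_{ploc}$ competitors $\tilde\sigma_\eta$ certify $\mathrm{geod}_W(x,y)\le \int 2\sqrt{W(\tilde\sigma)}\abs{\dot{\tilde\sigma}}$ — and an upper bound along mollifications is delicate for a potential without continuity. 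The robust fix, which I believe the authors use, is to replace mollification by reparametrization by the arc-length of a \emph{truncated} curve plus a careful Lusin-type argument: since $W(\sigma(\cdot))\abs{\dot\sigma}^2\in L^1$, for every $\epsilon$ one finds a compact set $K\subset I$ of nearly full measure on which $t\mapsto(\sigma(t),\dot\sigma(t))$ is continuous and $W(\sigma(\cdot))$ is bounded, and on $I\setminus K$ one uses absolute continuity of the integral to bound the discarded geodesic length by $\epsilon$; concatenating the (now genuinely locally Lipschitz after reparametrization) pieces over $K$ with short connecting arcs and passing to the limit $\epsilon\to0$ yields the claim. I would present the argument in this order: (1) the pointwise square inequality and reduction to finite energy; (2) parametrization-invariance and reduction to $I=[-1,1]$; (3) the Lusin/absolute-continuity regularization producing $\mathrm{Lip}_{ploc}$ competitors with geodesic cost at most $\int 2\sqrt{W(\sigma)}\abs{\dot\sigma}+\epsilon$; (4) passage to the limit and, for unbounded $I$, exhaustion by bounded subintervals plus the triangle inequality.
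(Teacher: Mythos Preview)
Your overall architecture --- the Young inequality $\abs{\dot\sigma}^2+W(\sigma)\ge 2\sqrt{W(\sigma)}\,\abs{\dot\sigma}$ followed by a reparametrization to produce an admissible competitor for $\mathrm{geod}_W$ --- is exactly the paper's. Where you diverge is in the regularization step, and there you make life much harder than necessary.

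The paper does \emph{not} use mollification or a Lusin-type argument. Instead it uses a single classical fact that you overlook: the arc-length reparametrization of a $W^{1,1}_{loc}$ curve is automatically $1$-Lipschitz. Concretely, with $s(t)=\int_{t_0}^t\abs{\dot\sigma}$ and $\tilde\sigma(s(t)):=\sigma(t)$, the curve $\tilde\sigma:J\to\R^N$ satisfies $\abs{\dot{\tilde\sigma}}=1$ a.e.\ on $J=s(I)$. One then composes with any nondecreasing surjection $\varphi\in\mathrm{Lip}_{loc}((-1,1),\mathrm{int}\,J)$ to get $\gamma=\tilde\sigma\circ\varphi\in\mathrm{Lip}_{loc}((-1,1))$, which extends continuously to $[-1,1]$ by the assumed endpoint limits of $\sigma$; hence $\gamma\in\mathrm{Lip}_{ploc}([-1,1],\R^N)$ with no approximation at all. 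The change of variables gives $\int_{-1}^1 2\sqrt{W(\gamma)}\abs{\dot\gamma}=\int_I 2\sqrt{W(\sigma)}\abs{\dot\sigma}$, and the lemma follows. This handles bounded and unbounded $I$ uniformly, so your step (4) is also unnecessary.

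By contrast, your proposed fixes have real obstacles, some of which you already flag. Mollification requires an upper semicontinuity of the length functional that simply fails for l.s.c.\ $W$. The Lusin route leaves you with countably many gaps to bridge by ``short connecting arcs,'' but a straight segment between nearby points can have infinite $\sqrt{W}$-length when $W$ is allowed to be $+\infty$ off the curve, and reusing $\sigma$ itself on the gaps puts you back in $H^1\setminus\mathrm{Lip}_{ploc}$ unless you reparametrize those pieces --- at which point you have rediscovered the arc-length trick, but locally and with extra bookkeeping. So the gap in your plan is precisely the missing observation that a \emph{global} arc-length reparametrization already lands you in the Lipschitz class, eliminating the need for any approximation.
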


\begin{proof}[Proof of Lemma \ref{regularization_lemma}]
W.l.o.g. we assume that $I$ is an open interval. Since $\dot{H}^1(I,\R^N)\subset W^{1,1}_{loc}(I, \R^N)$, we can define the arc-length $s:I\to J:=s(I)\subset\R$ by
\[
s(t):=\int_{t_0}^t|\dot{\sigma}|(x_1)\diff x_1,\quad t\in I,
\]
where $t_0\in I$ is fixed. Thus $s$ is a nondecreasing continuous function with $\dot{s}=|\dot{\sigma}|$ a.e. in $I$. Then the arc-length reparametrization of $\sigma$, i.e.
\[
\tilde{\sigma}(s(t)):=\sigma(t),\quad t\in I,
\]
is well-defined and provides a Lipschitz curve $\tilde{\sigma}:J\to\R^N$ with constant speed on the interval $J$, i.e. $|\dot{\tilde\sigma}|=1$ a.e., and such that $\tilde{\sigma}(\inf J)={\sigma}(\inf I)$ and $\tilde{\sigma}(\sup J)={\sigma}(\sup I)$. 
W.l.o.g. we may assume that  $\sigma$ is not constant, so $J$ has a nonempty interior. Then we consider an arbitrary function $\varphi\in\mathrm{Lip}_{loc}((-1,1),\mathrm{int}J)$ which is nondecreasing and surjective onto the interior of the interval $J$ and we set
\[
\gamma(t):=\tilde{\sigma}(\varphi(t)),\quad t\in (-1,1).
\]
So $\gamma$ is a locally Lipschitz map that is continuous on $[-1,1]$ as $\tilde{\sigma}$ admits limits at $\inf J$ and 
$\sup J$; thus, $\gamma\in \mathrm{Lip}_{ploc}([-1,1],\R^N)$. The changes of variable $s:=\varphi(t)$, resp. $s:=s(t)$, yield
\[
\int_{-1}^1 2\sqrt{W(\gamma(t))}|\dot{\gamma}|(t)\diff t=\int_J 2\sqrt{W(\tilde{\sigma}(s))}|\dot{\tilde{\sigma}}|(s)\diff s=\int_I 2\sqrt{W(\sigma(t))}\, |\dot{\sigma}|(t)\diff t.
\]
Combined with $\gamma(-1)=\sigma(\inf I)$ and $\gamma(1)=\sigma(\sup I)$, the definition of $\mathrm{geod}_W$ and the Young inequality imply
\[
E_W(\sigma, I)\ge \int_I 2\sqrt{W(\sigma(t))}\, |\dot{\sigma}|(t)\diff t=\int_{-1}^1 2\sqrt{W(\gamma(t))}|\dot{\gamma}|(t)\diff t\ge  \mathrm{geod}_W\big(\sigma(\inf I),\sigma(\sup I)\big).
\]
This completes the proof.
\end{proof}

The convergence of the $x'$-average in Theorem \ref{thm1} stating that $\overline{u}(\pm \infty)=u^\pm$ is a consequence of the following lemma:
\begin{lemma}
\label{closed_boundary}
Let $W:\R^N\to\R_+\cup\{+\infty\}$ be a lower semicontinuous function satisfying {\rm \bf (H1)} and {\rm \bf(H2)}. Then for every map $\sigma \in \dot{H}^1(\R,\R^N)$ such that $E_W(\sigma,\R)<+\infty$ with $E_W$ defined at \eqref{ineg_ener}, there exist two wells $u^-,\, u^+\in\{W=0\}$ such that
$
\lim\limits_{t\to\pm\infty}\sigma(t)=u^\pm.
$
\end{lemma}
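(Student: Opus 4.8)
The plan is to exploit the finite energy $E_W(\sigma,\R)<\infty$ to show that $\sigma(t)$ spends arbitrarily long time near the (finite) zero set $\{W=0\}$, and then use the absolute continuity of the Euclidean distance with respect to $\mathrm{geod}_W$ (Lemma \ref{cWgeqGeod}) to pin $\sigma$ down near a single well for large $|t|$. First I would show that $\mathrm{geod}_W(\sigma(s),\sigma(t))\to 0$ as $s,t\to+\infty$: indeed, for $s<t$, Lemma \ref{regularization_lemma} applied to the interval $I=(s,t)$ gives $\mathrm{geod}_W(\sigma(s),\sigma(t))\le E_W(\sigma,(s,t))$, and the right-hand side is the tail of a convergent integral, hence tends to $0$ as $s\to+\infty$. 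By Lemma \ref{cWgeqGeod}, this forces $|\sigma(s)-\sigma(t)|\to 0$, i.e.\ $(\sigma(t))$ is Cauchy as $t\to+\infty$, so $\sigma(t)\to u^+$ for some $u^+\in\R^N$ (and similarly $\sigma(t)\to u^-$ as $t\to-\infty$). Note $\sigma\in\dot H^1(\R,\R^N)$ is continuous, so these are genuine pointwise limits.

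It remains to check that $u^+\in\{W=0\}$ (and likewise $u^-$). Here I would argue by contradiction: if $W(u^+)>0$, then by lower semicontinuity there is a ball $B(u^+,r)$ on which $W\ge c>0$; since $\sigma(t)\to u^+$, there is $T$ with $\sigma(t)\in B(u^+,r)$ for all $t\ge T$, so $\int_T^\infty W(\sigma(t))\,\diff t\ge\int_T^\infty c\,\diff t=+\infty$, contradicting $E_W(\sigma,\R)<\infty$. (One should first rule out $W(u^+)=+\infty$ by the same estimate with $c$ replaced by any large constant, or simply observe that the lower bound near $u^+$ is positive in either case.) This shows $W(u^+)=0$, so $u^+\in\{W=0\}$, and symmetrically $u^-\in\{W=0\}$.

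The only mildly delicate point is that $\sigma$ need not be globally Lipschitz, only in $\dot H^1\subset W^{1,1}_{loc}$; but Lemma \ref{regularization_lemma} is stated precisely for $\sigma\in\dot H^1(I,\R^N)$ with limits at the endpoints, and on any bounded subinterval $(s,t)$ those endpoint limits exist by absolute continuity of the integral, so the lemma applies directly. Thus no regularity obstacle arises. I expect the main conceptual step — and the place where hypotheses {\bf (H1)} and {\bf (H2)} are truly used — to be the Cauchy property via Lemma \ref{cWgeqGeod}: {\bf (H1)} (finiteness of $\{W=0\}$) and {\bf (H2)} (coercivity at infinity) are exactly what make the Euclidean distance absolutely continuous with respect to $\mathrm{geod}_W$, and without that one could not upgrade ``$\mathrm{geod}_W$-Cauchy'' to ``Euclidean-Cauchy''. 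Everything else is a routine tail-of-a-convergent-integral argument.
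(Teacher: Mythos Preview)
Your proof is correct and follows essentially the same approach as the paper: both use Lemma~\ref{regularization_lemma} on tail intervals to get $\mathrm{geod}_W(\sigma(s),\sigma(t))\le E_W(\sigma,(s,t))\to 0$, then invoke Lemma~\ref{cWgeqGeod} to upgrade this to a Euclidean Cauchy condition, and finally use integrability of $W(\sigma(\cdot))$ together with lower semicontinuity of $W$ to force $W(u^\pm)=0$. The paper phrases the first step as a bound on the $\mathrm{geod}_W$-total variation of $\sigma$, while you phrase it directly as a tail estimate, but the content is identical.
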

\begin{proof}[Proof of Lemma \ref{closed_boundary}]
 We use the fact that the energy bound $E_W(\sigma,\R)<+\infty$ yields a bound on the total variation of $\sigma:\R\to \R^N$ where $\R^N$ is endowed with the pseudo-metric $\mathrm{geod}_W$. More precisely, for every sequence $t_1<\dots< t_k$ in $\R$, we have by Lemma~\ref{regularization_lemma}:
$$
\sum_{i=1}^k \mathrm{geod}_W(\sigma(t_{i+1}),\sigma(t_i))\le \sum_{i=1}^k E_W(\sigma,[t_i,t_{i+1}])\le E_W(\sigma,\R)<+\infty.
$$
In particular, for every $\varepsilon>0$, there exists $R>0$ such that for all $t,s\in\R$ with $t,s\ge R$ or $t,s\le -R$, one has $\mathrm{geod}_W(\sigma(t),\sigma(s))<\varepsilon$. Since by Lemma~\ref{cWgeqGeod}, smallness of $\mathrm{geod}_W(x,y)$ implies smallness of $|x-y|$, we deduce that $\sigma$ has a limit $u^\pm\in\R^N$ at $\pm\infty$. Since $W(\sigma(\cdot))$ is integrable in $\R$, we have furthermore that $W(u^\pm)=0$.
\end{proof}

\bigskip

Now we can prove the convergence of the $x'$-average $\bar u$ at $\pm \infty$ as stated in Theorem \ref{thm1}:

\begin{proof}[Proof of the convergence in $x'$-average in Theorem \ref{thm1}] By Lemma \ref{lemmaV}, we have $E_V(\overline{u},\R)<+\infty$ for the lower semicontinuous function $V:\R^N\to\R_+\cup\{+\infty\}$ satisfying {\rm \bf (H1)} and {\rm \bf(H2)}. By Lemma~\ref{closed_boundary} applied to $E_V$, we deduce that there exists $u^\pm\in\{V=0\}=\{W=0\}$ such that $\lim_{t\to\pm\infty}\overline{u}(t)=u^\pm$.
\end{proof}

The pointwise convergence of $u(x_1, \cdot)$ as $x_1\to \pm\infty$ stated in Theorem \ref{thm1} is proved in the following:

\begin{proof}[Proof of the pointwise convergence in Theorem \ref{thm1}] \label{pagina}
We prove that $u(x_1, \cdot)$ converges a.e. in $\omega$ to $u^\pm\in\{W=0\}$ as $x_1\to \pm \infty$, where $u^\pm$ are the limits $\bar u(\pm \infty)$ of the $x'$-average $\bar u$ proved above. For that, we have by Fubini's theorem:
$$E(u)\geq \int_\Omega |\partial_1 u|^2+W(u)\, \diff x\geq \int_{\omega} E_W(u(\cdot, x'), \R) \, \diff x'$$
with the usual notation
$$E_W(\sigma, \R)=\int_{\R} \abs{\dot \sigma}^2+W(\sigma)\, \diff x_1,\quad \sigma\in \dot{H}^1(\R, \R^N).$$
As $E(u)<\infty$, we deduce that $E_W(u(\cdot, x'), \R)<\infty$ for a.e. $x'\in \omega$.
By Lemma \ref{closed_boundary}, we deduce that for a.e. $x'\in \omega$, there exist two wells $u^\pm(x')$ of $W$
such that
\be
\label{point}
\lim\limits_{x_1\to\pm\infty}u(x_1, x')=u^\pm(x').
\ee
By \eqref{poincare_wirtinger}, as $\bar u(\pm \infty)=u^\pm$, we know that $\| u(R_n^\pm,\cdot)- u^\pm\|_{L^2(\omega,\R^N)}\to 0$ as $n\to\infty$ for two sequences $R_n^\pm\to \pm \infty$. Up to a subsequence, we deduce that
$u(R_n^\pm,\cdot)\to u^\pm$ a.e. in $\omega$ as $n\to \infty$. By \eqref{point}, we conclude that
$u^\pm(x')=u^\pm$ for a.e. $x'\in \omega$.
\end{proof}

\section{The $L^2$ convergence}
\label{sec3}

In this section, we prove that $u(x_1, \cdot)$ converges in $L^2(\omega, \R^N)$ to $u^\pm$ as $x_1\to \pm \infty$. 
The idea is to go beyond the averaging procedure in Section \ref{sec:2} and keep the full information given by the 
$x'$-average energy $e$ introduced at Section \ref{sec:2} over the set ${H}^1(\omega, \R^N)$. More precisely, we extend $e$ to the space $L^2(\omega,\R^N)$  as follows
\be
\label{defe}
e(v)=\begin{cases}
\displaystyle \int_{\omega} \hspace{-0.4cm} - \, \, \Big(|{\nabla'} v|^2+W(v)\Big) \diff x' \quad & \textrm{ if } \, v\in {H}^1(\omega, \R^N),\\
+\infty  \quad & \textrm{ if } \, v\in L^2(\omega, \R^N)\setminus {H}^1(\omega, \R^N).
\end{cases}
\ee
In particular, we have for every $u\in \dot{H}^1(\Omega, \R^N)$,
\be
\label{supl}
E(u)= \int_{\R} \Big(\|\partial_1 u(x_1,\cdot)\|_{L^2(\omega, \R^N)}^2+|\omega| e(u(x_1,\cdot)) \Big)\diff x_1.
\ee
In the sequel, we will also need the following properties of the energy $e$:
\begin{lemma}\label{lemma_e}
If $W:\R^N\to\R_+\cup\{+\infty\}$ is a lower semicontinuous function satisfying {\rm \bf(H2)}, then
\begin{enumerate}
\item
$e$ is lower semicontinuous in $L^2(\omega, \R^N)$, 
\item
the sets of zeros of $e$ and $W$ coincide; moreover $\Sigma:=\{e=0\}=\{W=0\}\subset \R^N$ is compact,
\item
for every \(\ve>0\), we have
\[
k_\ve:=\inf\big\{e(v)\; : \; v\in L^2(\omega,\R^N)\text{ with } d_{L^2}(v,\Sigma)\ge\ve\big\}>0.
\]
\end{enumerate}
\end{lemma}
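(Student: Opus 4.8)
The plan is to prove the three claims in order, reusing the weak $H^1(\omega)$-lower semicontinuity of $e$ on $H^1(\omega,\R^N)$ that was already established in Step~1 of the proof of Lemma~\ref{lemmaV}, and upgrading it to $L^2$-lower semicontinuity.

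For claim~\emph{1}, I would take a sequence $v_n\to v$ in $L^2(\omega,\R^N)$ and show $e(v)\le\liminf_n e(v_n)$. Passing to a subsequence realizing the $\liminf$, we may assume $\liminf_n e(v_n)=\lim_n e(v_n)<+\infty$ (otherwise there is nothing to prove), so in particular $v_n\in H^1(\omega,\R^N)$ for $n$ large. The quantity $\int_\omega\!\!\!-\,|\nabla' v_n|^2$ is bounded, hence by the Poincar\'e–Wirtinger inequality (applied to $v_n$ minus its average, which converges since $v_n\to v$ in $L^2$) the sequence $(v_n)$ is bounded in $H^1(\omega,\R^N)$. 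Thus, up to a further subsequence, $v_n\rightharpoonup v$ weakly in $H^1$, strongly in $L^2$, and a.e.\ in $\omega$; in particular $v\in H^1(\omega,\R^N)$, so $e(v)$ is given by the integral formula. The weak $H^1$-lower semicontinuity of $e$ from Step~1 of Lemma~\ref{lemmaV} (which combines weak-$L^2$ lower semicontinuity of $\int|\nabla' v_n|^2$, the a.e.\ convergence $v_n\to v$, lower semicontinuity of $W$, and Fatou) then gives $e(v)\le\liminf_n e(v_n)$.

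For claim~\emph{2}: if $v\in\R^N$ is constant then $e(v)=W(v)$, so $\{W=0\}\subset\{e=0\}$; conversely if $e(v)=0$ then $\int_\omega\!\!\!-\,|\nabla' v|^2=0$ forces $v$ to be a.e.\ equal to a constant $z$, and then $0=e(v)=W(z)$, so $v\in\{W=0\}$. Hence $\Sigma=\{e=0\}=\{W=0\}$ as subsets of $\R^N$. This set is finite by~{\rm\bf(H1)} whenever that hypothesis is in force, but for compactness I only need boundedness: by~{\rm\bf(H2)} there are $\rho,c>0$ with $W(z)\ge c$ for $|z|\ge\rho$, so $\{W=0\}\subset\overline{B(0,\rho)}$; combined with lower semicontinuity of $W$ (so $\{W=0\}=\{W\le 0\}$ is closed), $\Sigma$ is closed and bounded, hence compact. (Here $\Sigma$ is identified with the constant maps it contains, a compact subset of $L^2(\omega,\R^N)$.)

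For claim~\emph{3}, I would argue by contradiction: suppose there is $\ve>0$ and a sequence $v_n\in L^2(\omega,\R^N)$ with $d_{L^2}(v_n,\Sigma)\ge\ve$ but $e(v_n)\to 0$. In particular $e(v_n)$ is bounded, so as in claim~\emph{1} the sequence $v_n-\int_\omega\!\!\!-\,v_n$ is bounded in $H^1$ by Poincar\'e–Wirtinger, and $\int_\omega\!\!\!-\,|\nabla'(v_n-\int\!\!-\,v_n)|^2\to 0$; hence $v_n-\int_\omega\!\!\!-\,v_n\to 0$ in $H^1(\omega,\R^N)$ and a fortiori in $L^2$. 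The main obstacle is controlling the averages $z_n:=\int_\omega\!\!\!-\,v_n$: I claim $(z_n)$ is bounded. If not, along a subsequence $|z_n|\to\infty$; writing $v_n=z_n+w_n$ with $\int_\omega w_n=0$ and $w_n\to 0$ in $L^2$ and (up to subsequence) a.e., hypothesis~{\rm\bf(H2)} gives $\liminf_n W(z_n+w_n(x'))\ge\liminf_{|z|\to\infty}W(z)>0$ for a.e.\ $x'$, so by Fatou $\liminf_n e(v_n)>0$, a contradiction — this is exactly the argument of Step~3 of Lemma~\ref{lemmaV}. So $(z_n)$ is bounded; passing to a subsequence $z_n\to z\in\R^N$, whence $v_n\to z$ in $L^2(\omega,\R^N)$. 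By the $L^2$-lower semicontinuity of $e$ (claim~\emph{1}), $e(z)\le\liminf_n e(v_n)=0$, so $z\in\Sigma$; but then $d_{L^2}(v_n,\Sigma)\le\|v_n-z\|_{L^2}\to 0$, contradicting $d_{L^2}(v_n,\Sigma)\ge\ve$. This proves $k_\ve>0$ and completes the lemma.
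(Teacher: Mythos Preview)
Your proof is correct and follows essentially the same approach as the paper: in each step you use the same ingredients (Poincar\'e--Wirtinger to upgrade $\dot H^1$ control to $H^1$ boundedness, the weak-$H^1$ lower semicontinuity of $e$ from Step~1 of Lemma~\ref{lemmaV}, and the {\rm\bf(H2)}--Fatou argument to rule out escaping averages), and the only cosmetic difference is the way you close the contradiction in claim~\emph{3} (you show the limit lies in $\Sigma$ and then contradict $d_{L^2}(v_n,\Sigma)\ge\ve$, whereas the paper passes the constraint to the limit and contradicts $e(v)>0$).
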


\begin{proof} We divide the proof in several steps:

\medskip
\noindent \textsc{Step 1. Lower semicontinuity of $e$ in $L^2(\omega, \R^N)$.} Indeed, let $v_n\to v$ in 
$L^2(\omega, \R^N)$. W.l.o.g., we may assume that $(e(v_n))_n$ is bounded, in particular, $(v_n)_n$ is bounded in $H^1(\omega, \R^N)$; thus, $(v_n)_n$ converges to $v$ weakly in $H^1(\omega, \R^N)$. By Step 1 in the proof of Lemma \ref{lemmaV}, we know that $e\big|_{H^1(\omega, \R^N)}$ is lower semicontinuous w.r.t. the weak $H^1$ topology and the conclusion follows. 

\medskip
\noindent \textsc{Step 2. Zeros of $e$.} The equality of the zero sets of $e$ and $W$ is straightforward thanks to the connectedness of $\omega$. Thanks to the assumption 
{\rm \bf(H2)}, the set of zeros $\Sigma$ of $W$ is bounded and by the lower semicontinuity and non-negativity of $W$, the set of zeros $\Sigma$ of $W$ is closed; thus, $\Sigma$ is compact in $\R^N$.

\medskip
\noindent \textsc{Step 3. We prove that $k_\eps>0$.} Assume by contradiction that $k_\eps=0$ for some $\eps>0$. Then there exists a minimizing sequence $v_n\in L^2(\omega, \R^N)$ such that \(d_{L^2}(v_n,\Sigma)\ge\ve\) for every \(n\in\N\) and \(\lim_{n\to\infty}e(v_n)=0\). W.l.o.g., we may assume that $v_n\in {H}^1(\omega, \R^N)$ for every $n$ as $\|v_n\|_{\dot{H^1}}\to 0$. Denoting $\overline{v_n}$ the ($x'$-)average of $v_n$, the Poincar\'e-Wirtinger inequality implies that the sequence $(w_n:=v_n-\overline{v_n})_n$ converges in $H^1(\omega, \R^N)$ to $0$. Up to extracting a subsequence, we may assume that 
$w_n\to 0$ for a.e. $x'\in \omega$.

\medskip
\noindent\emph{Claim:} The sequence $(\overline{v_n})_n$ is bounded in $\R^N$.
\medskip

\noindent Indeed, assume by contradiction that there exists a subsequence of $(\overline{v_n})_n$ (still denoted by $(\overline{v_n})_n$) such that $|\overline{v_n}|\to \infty$ as $n\to \infty$. As $W$ is l.s.c. and $w_n\to 0$ for a.e. $x'\in \omega$, the assumption ${\rm \bf (H2)}$ implies 
$$\liminf_{n\to \infty} W(v_n(x'))=\liminf_{n\to \infty} W(w_n(x')+\overline{v_n})\geq \liminf_{|z|\to \infty} W(z)>0 \quad \textrm{for a.e. $x'\in \omega$}$$
which by integration over $x'\in \omega$ contradicts the assumption $e(v_n)\to 0$. This finishes the proof of the claim.
\medskip

\noindent As a consequence of the claim, we deduce that \((v_n)_{n\in\N}\) is bounded in \(H^1(\omega,\R^N)\). In particular, \((v_n)_{n\in\N}\) has a subsequence that converges in \(L^2(\omega,\R^N)\) to a map \(v\in H^1(\omega,\R^N)\) and we deduce \(d_{L^2}(v,\Sigma)\ge\ve\), in particular, $v$ is not a zero of $e$, i.e., $e(v)>0$. As $e$ is l.s.c. in 
$L^2(\omega, \R^N)$, we have $0=\lim_{n\to\infty}e(v_n)\geq e(v)$, which contradicts that $e(v)>0$. 
\end{proof}

\bigskip

Now we prove the $L^2$-convergence of $u(x_1, \cdot)$ to $u^\pm$ as $x_1\to \pm \infty$:

\begin{proof}[Proof of the $L^2$-convergence in Theorem \ref{thm1}]
Take \(u\in H^1_{loc}(\Omega,\R^N)\) such that \(E(u)<+\infty\) and set \(\sigma(t):=u(t, \cdot)\in H^1(\omega,\R^N)\) for a.e. \(t\in\R\). We prove that \(\sigma(t)\) converges in $L^2(\omega,\R^N)$ to a limit that is a zero in $\Sigma$ as \(t\to+\infty\) (the proof of the convergence as \(t\to-\infty\) is similar). Moreover, we will see that these limits are in fact the zeros $u^\pm$ of $W$ given by the $x'$-average $\bar u$ and the a.e. convergence of $u(x_1, \cdot)$ as $x_1\to \pm \infty$.

\medbreak
\noindent \textsc{Step 1: Continuity.} We prove that \(t\in \R\mapsto\sigma(t)\in L^2(\omega,\R^N)\) is continuous in \(\R\), and moreover, it is a \(\frac12\)-H\"older map. Indeed, for a.e. \(t,s\in\R\), we have
\[
d_{L^2}(\sigma(t),\sigma(s))^2=\int_\omega\Big|\int_t^s\partial_{x_1}u(x_1,x')\diff x_1\Big|^2\diff x'\le\abs{t-s}\|\partial_{x_1}u\|_{L^2(\Omega, \R^N)}^2.
\]

\medbreak
 
\noindent \textsc{Step 2: Convergence of a subsequence $(\sigma(t_n))_n$ to some \(u^+\in\Sigma\).} Since \(e(\sigma(\cdot))\in L^1(\R)\) by \eqref{supl}, there is a sequence \((t_n)_{n\in\N}\to +\infty\) such that \(\lim_{n\to\infty}e(\sigma(t_n))=0\). Exactly like in Step~3 in the proof of Lemma \ref{lemma_e}, we deduce that \((\sigma(t_n))_{n\in\N}\) has a subsequence that converges strongly in \(L^2(\omega,\R^N)\) to some map \(\sigma_\infty\in L^2(\omega,\R^N)\) (the assumption {\rm \bf (H2)} is essential here). Since \(e\) is l.s.c. in \(L^2\) and $e\geq 0$ in $L^2$, we deduce that \(e(\sigma_\infty)=0\) and so, there exists \(u^+\in\Sigma\) such that \(\sigma_\infty\equiv u^+\).

\medbreak

\noindent \textsc{Step 3: Convergence to \(u^+\) in \(L^2\) as \(t\to+\infty\).} Assume by contradiction that 
$\sigma(t)$ does not converge in \(L^2(\omega,\R^N)\) to $u^+$ as $t\to \infty$. Then there is a sequence \((s_n)_{n\in\N}\to +\infty\) such that \(\ve:=\inf_{n\in\N}d_{L^2}(\sigma(s_n),u^+)>0\). Now, by Step 1, the curve \(t\in [s_n,+\infty)\mapsto\sigma(t)\in L^2(\omega,\R^N)\) is continuous. Moreover, $\sigma(s_n)$ doesn't belong to the \(L^2\)-ball centered at \(u^+\) with radius \(\frac{3\ve}{4}\). By Step~2, it has to enter (at some time $t>s_n$) in the \(L^2\)-ball centered at \(u^+\) with radius \(\frac \ve 4\). Therefore, the curve \(\sigma_{\vert (s_n,+\infty)}\) has to cross the ring ${\cal R}:=B_{L^2}(u^+, \frac{3\eps}4)\setminus B_{L^2}(u^+, \frac{\eps}4)$, so it has $L^2$-length larger than $\frac \eps 2$, i.e.,
\[
\int_{ \{t\in (s_n,+\infty)\, :\, \sigma(t)\in {\cal R} \} }  \|\partial_{x_1}u(t,\cdot)\|_{L^2(\omega, \R^N)} \, \diff t=
\int_{ \{t\in (s_n,+\infty)\, :\, \sigma(t)\in {\cal R} \} } \|\dot{\sigma}\|_{L^2(\omega, \R^N)}\, \diff t\ge \frac \eps 2.
\]
Moreover, by the third claim in Lemma \ref{lemma_e}, we know that $e(\sigma(t))\geq k_{\eps/4}$ if $\sigma(t)\in {\cal R}$ (up to lowering $\ve$, we may assume that the other zeros of $\Sigma$ are placed at distance larger than $2\eps$ from $u^+$, the assumption {\rm \bf (H1)} is essential here). We obtain
\begin{align}
\label{antrad}
\int_{s_n}^{+\infty} \sqrt{e(u(t,\cdot))}\;\|\partial_{x_1}u(t,\cdot)\|_{L^2(\omega, \R^N)}\, \diff t&\ge 
\int_{ \{t\in (s_n,+\infty)\, :\, \sigma(t)\in {\cal R} \} } \sqrt{e(u(t,\cdot))}\;\|\partial_{x_1}u(t,\cdot)\|_{L^2(\omega, \R^N)}\, \diff t\\
\nonumber
&\geq \frac{\ve}{2} \sqrt{k_{\ve/4}}.
\end{align}
This is a contradiction with the assumption \(E(u)<+\infty\) implying by \eqref{supl}:
\begin{align*}
2|\omega|^\frac12 \int_{s_n}^{+\infty} \sqrt{e(u(t,\cdot))}\;\|\partial_{x_1}u(t,\cdot)\|_{L^2(\omega, \R^N)}\diff t
&\le  \int_{s_n}^{+\infty} \Big(|\omega| e(u(t,\cdot)) + \|\partial_{x_1}u(t,\cdot)\|_{L^2(\omega, \R^N)}^2\Big)\diff t\\
&\underset{n\to\infty}{\longrightarrow}0.
\end{align*}

\medbreak

\noindent \textsc{Step 4: The \(L^2\) limits $u^\pm$ coincide with the average limits $\bar u(\pm\infty)$.}  This is clear as \(L^2\) convergence implies convergence in average.
\end{proof}

\begin{remark}
i) The above proof does not use (so, it is independent of) the almost everywhere convergence of $u(x_1, \cdot)$ as $x_1\to \pm \infty$ or the convergence of the $x'$-average $\bar u$. Therefore, thanks to this proof, one can obtain as a direct consequence the convergence of the $x'$-average $\bar u$ as well as the almost everywhere convergence of $u(x_1, \cdot)$ as $x_1\to \pm \infty$.\footnote{As the $L^2$-convergence implies almost everywhere convergence of $u(x_1, \cdot)$ only up to a subsequence, one should repeat the argument in the proof of the a.e. convergence in Theorem \ref{thm1} at page \pageref{pagina}.} 

ii) Also, the above proof applies to Lemma \ref{closed_boundary} leading to a second method that does not use the geodesic distance $\mathrm{geod}_W$.

iii) Behind the above proof, the notion of geodesic distance over $L^2(\omega, \R^N)$ with the  degenerate weight $\sqrt{e}$ is hidden (see \eqref{antrad}). Therefore, one could repeat the arguments in the first proof of Theorem \ref{thm1} based on this geodesic distance.  
\end{remark}

The above argument can also be used directly to obtain a second proof for the existence of limits of $\bar u$ at $\pm \infty$ without using the
geodesic pseudo-distance $\mathrm{geod}_W$ (as presented in the proof in Section \ref{sec:2}). For completeness, we redo the proof in the sequel:

\begin{proof}[Second proof of the convergence in $x'$-average in Theorem \ref{thm1}]
Let $u\in \dot{H}^1(\Omega, \R^N)$ such that $E(u)<\infty$. We want to prove that the $x'$-average $\bar u$ admits a limit $u^+$ as $x_1\to \infty$ and $W(u^+)=0$ (the proof of the convergence as \(x_1\to-\infty\) is similar). Let $V$ and $E_V$ given by Lemma \ref{lemmaV}. Recall that $\Sigma:=\{V=0\}=\{W=0\}$ and $E_V(\bar u)\leq \frac1{|\omega|} E(u)<\infty$.

\medskip
\noindent \textsc{Step 1. We prove that for every $\ve>0$,}
\[
\kappa_\ve:=\inf\big\{V(z)\; : \; z\in \R^N, \, d_{\R^N}(z,\Sigma)\ge\ve\big\}>0.
\]
Assume by contradiction that there exists a sequence $(z_n)_n$ such that $V(z_n)\to 0$ and $d_{\R^N}(z_n,\Sigma)\geq \ve$.
By the third claim in Lemma \ref{lemmaV}, we deduce that $(z_n)_n$ is bounded, so that, up to a subsequence, $z_n\to z$ for some $z\in \R^N$ yielding
$d_{\R^N}(z,\Sigma)\ge\ve$ and $V(z)=0$, i.e., $z\in \Sigma$ (since $V$ is l.s.c. and $V\geq 0$) which is a contradiction.

\medskip
\noindent \textsc{Step 2. There exists a sequence $(\bar u(t_n))_n$ converging to a well $u^+\in \Sigma$}. Indeed, as $V(\bar u)\in L^1(\R)$, there exists a sequence $t_n\to \infty$ with $V( \bar u(t_n))\to 0$. By {\rm \bf (H2)}, $(\bar u(t_n))_n$ is bounded, so that up to a subsequence, $\bar u (t_n)\to u^+$ as $n\to \infty$ for some point $u^+\in \R^N$. As $V$ is l.s.c. and $V\geq 0$, we deduce that $V( u^+)=0$, i.e., $u^+\in \Sigma$.

\medbreak

\noindent \textsc{Step 3: Convergence of $\bar u$ to \(u^+\) as \(x_1\to+\infty\).} Assume by contradiction that 
$\bar u(x_1)$ does not converge to $u^+$ as $x_1\to \infty$. Then there is a sequence \((s_n)_{n\in\N}\to +\infty\) such that \(\ve:=\inf_{n\in\N}d_{\R^N}(\bar u(s_n),u^+)>0\). As $\bar u:[s_n,+\infty)\to\R^N$ is continuous, by Step 2, it has to get out of the ball $B(\bar u(s_n), \ve/4)$ and it has to enter in the ball $B(u^+, \ve/4)$. Therefore, $\bar u$ has to cross the ring ${\cal R}:=B(u^+, \frac{3\eps}4)\setminus B(u^+, \frac{\eps}4)\subset \R^N$. Moreover, by Step 1, we know that $V(\bar u(x_1))\geq \kappa_{\eps/4}$ if $\bar u(x_1)\in {\cal R}$ (where we assumed w.l.o.g. that $\eps>0$ is small enough so that the other zeros of $\Sigma$ are placed at distance larger than $2\eps$ from $u^+$). We obtain
$$
\int_{s_n}^{+\infty} \sqrt{V(\bar u(x_1))}\;\big|\frac{\diff}{\diff x_1}\overline{u}(x_1)\big|\, \diff x_1\ge 
\int_{ \{x_1\in (s_n,+\infty)\, :\, \bar u(x_1)\in {\cal R} \} } \sqrt{V(\bar u(x_1))}\; \big|\frac{\diff}{\diff x_1}\overline{u}(x_1)\big|\, \diff x_1\geq \frac{\ve}{2} \sqrt{\kappa_{\ve/4}}.
$$
This is a contradiction with the assumption \(E_V(\bar u)<+\infty\) implying
$$
2\int_{s_n}^{+\infty} \sqrt{V(\bar u(x_1))}\;\big|\frac{\diff}{\diff x_1}\overline{u}(x_1)\big|\diff x_1
\le  \int_{s_n}^{+\infty} \Big(\big|\frac{\diff}{\diff x_1}\overline{u}(x_1)\big|^2+V(\bar u (x_1))\Big)\diff x_1
\underset{n\to\infty}{\longrightarrow}0.
$$

\end{proof}

\section{Proof of Theorem \ref{thm2}}
\label{sec4}

In this section, we consider $d=N$, $\Omega=\R\times \omega$ with $\omega=\mathbb{T}^{d-1}$ and $u\in H^1_{loc}(\Omega, \R^d)$ periodic in $x'\in \omega$ with $\bar u_1=a$ in $\R$ for some constant $a\in \R$ (recall that $\bar u$ is the $x'$-average of $u$). Note that $|\omega|=1$. We set
$$L^2_a(\omega, \R^d):=\left\{v=(v_1, \dots, v_d)\in  L^2(\omega, \R^d)\, :\, \int_{\omega} v_1\, dx'=a\right\}$$ and
$H^1_a(\omega, \R^d):=H^1\cap L^2_a(\omega, \R^d)$. Note that for a.e. $x_1\in \R$, $u(x_1, \cdot)\in 
H^1_a(\omega, \R^d)$.
We define the following energy $e_a$ on the convex closed subset $L^2_a(\omega,\R^d)$ of  $L^2(\omega,\R^d)$:
\be
\label{defea}
e_a(v)=\begin{cases}
\displaystyle \int_{\omega} \Big(|{\nabla'} v|^2+W(v)\Big) \diff x' \quad & \textrm{ if } \, v\in {H}^1_a(\omega, \R^d),\\
+\infty  \quad & \textrm{ if } \, v\in L^2_a(\omega, \R^d)\setminus {H}^1(\omega, \R^d).
\end{cases}
\ee
In particular, we have for every $u\in \dot{H}^1(\Omega, \R^d)$ with $\bar u_1=a$:
\be
\label{supla}
E(u)= \int_{\R} \Big(\|\partial_1 u(x_1,\cdot)\|_{L^2(\omega, \R^d)}^2+ e_a(u(x_1,\cdot)) \Big)\diff x_1.
\ee
The aim is to adapt the proof of Theorem \ref{thm1} given in Section \ref{sec3} to Theorem \ref{thm2}. We start by transfering the properties
 of the energy $e$ in Lemma \ref{lemma_e} to the energy $e_a$ defined in $L^2_a(\omega, \R^d)$. More precisely, 
if $W:\R^d\to\R_+\cup\{+\infty\}$ is a lower semicontinuous function, then
$e_a$ is lower semicontinuous in $L^2_a(\omega, \R^d)$ endowed with the strong $L^2$-norm and the sets of zeros of $e_a$ and $W(a, \cdot)$ coincide, i.e., $$\Sigma^a:=\{v\in L^2_a(\omega, \R^d)\, :\, e_a(v)=0\}=\{z=(a, z')\in \R^d\, :\, W(a, z')=0\}.$$ If in addition $W$ satisfies $\textrm{\bf (H2)}_a$, then
$\Sigma^a$ is compact in $\R^d$ and
for every \(\ve>0\), we have
\[
k^a_{\ve}:=\inf\big\{e_a(v)\; : \; v\in L^2_a(\omega,\R^d)\text{ with } d_{L^2}(v,\Sigma^a)\ge\ve\big\}>0
\]
(the proof of these properties follows by the same arguments presented in the proof of Lemma \ref{lemma_e}).

\begin{proof}[Proof of Theorem \ref{thm2}] 
Let \(u\in H^1_{loc}(\Omega,\R^d)\) such that \(E(u)<+\infty\) and $\bar u_1=a$ in $\R$. We set \(\sigma(t):=u(t, \cdot)\in H^1_a(\omega,\R^d)\) for a.e. \(t\in\R\). We prove that \(\sigma(t)\) converges in $L^2(\omega,\R^d)$ to a limit that is a zero in $\Sigma^a$ as \(t\to+\infty\) (the proof of the convergence as \(t\to-\infty\) is similar). As in Steps~1 and 2 in the proof of the $L^2$-convergence in Theorem \ref{thm1}, we have that
 \(t\in \R\mapsto\sigma(t)\in L^2_a(\omega,\R^d)\) is a \(\frac12\)-H\"older continuous map in $\R$ and there is a sequence \((t_n)_{n\in\N}\to +\infty\) such that \(\sigma(t_n)\to u^+\) in \(L^2(\omega,\R^d)\) for a well \(u^+\in\Sigma^a\) (the assumption 
 $\textrm{\bf (H2)}_a$ is essential here). In order to prove the convergence of $\sigma(t)$ to \(u^+\) in \(L^2\) as \(t\to+\infty\), we argue by contradiction. If $\sigma(t)$ does not converge in \(L^2(\omega,\R^d)\) to $u^+$ as $t\to \infty$, then there is a sequence \((s_n)_{n\in\N}\to +\infty\) such that \(\ve:=\inf_{n\in\N}d_{L^2}(\sigma(s_n),u^+)>0\). We repeat the argument in 
 Step 3 in the proof of the $L^2$-convergence in Theorem \ref{thm1} by restricting ourselves to $L^2_a(\omega,\R^d)$ endowed by the strong $L^2$ topology. More precisely, the continuous curve \(t\in [s_n,+\infty)\mapsto\sigma(t)\in L^2_a(\omega,\R^d)\) has to cross the ring ${\cal R}_a:=\big(B_{L^2}(u^+, \frac{3\eps}4)\setminus B_{L^2}(u^+, \frac{\eps}4)\big)\cap L^2_a(\omega,\R^d)$, so it has $L^2$-length larger than $\frac \eps 2$, i.e.,
\[
\int_{ \{t\in (s_n,+\infty)\, :\, \sigma(t)\in {\cal R}_a \} }  \|\partial_{x_1}u(t,\cdot)\|_{L^2(\omega, \R^d)} \, \diff t=
\int_{ \{t\in (s_n,+\infty)\, :\, \sigma(t)\in {\cal R}_a \} } \|\dot{\sigma}\|_{L^2(\omega, \R^d)}\, \diff t\ge \frac \eps 2.
\]
As $e(\sigma(t))\geq k^a_{\eps/4}$ if $\sigma(t)\in {\cal R}_a$ (up to lowering $\ve$, we may assume that the other zeros of $\Sigma^a$ are placed at distance larger than $2\eps$ from $u^+$, the assumption $\textrm{\bf (H1)}_a$ is essential here), we obtain
\begin{align*}
\int_{ \{t\in (s_n,+\infty)\, :\, \sigma(t)\in {\cal R}_a \} } \sqrt{e_a(u(t,\cdot))}\;\|\partial_{x_1}u(t,\cdot)\|_{L^2(\omega, \R^d)}\, \diff t
\geq \frac{\ve}{2} \sqrt{k^a_{\ve/4}}.
\end{align*}
This is a contradiction with \eqref{supla}:
\begin{align*}
2\int_{s_n}^{+\infty} \sqrt{e_a(u(t,\cdot))}\;\|\partial_{x_1}u(t,\cdot)\|_{L^2(\omega, \R^d)}\diff t
&\le  \int_{s_n}^{+\infty} \Big(e_a(u(t,\cdot)) + \|\partial_{x_1}u(t,\cdot)\|_{L^2}^2\Big)\diff t
\underset{n\to\infty}{\longrightarrow}0.
\end{align*}
Clearly, the \(L^2\) convergence implies also the convergence
in average of $\sigma(t)$ over $\omega$ as $t\to \infty$ as well as the a.e. convergence $\sigma(t)\to u^+$ in $\omega$ but only up to a subsequence. For the full almost everywhere convergence of \(u(x_1,\cdot)\to u^+\), we proceed as follows. First, by the Poincar\'e-Wirtinger inequality on $\omega=\T^{d-1}$, we have for a.e. \(x_1\in \R\),
\[
\int_{\omega}\abs{\nabla' u_1(x_1,x')}^2\diff x'\ge 4\pi^2 \int_{\omega} \abs{u_1(x_1,x')-\bar u_1(x_1)}^2\diff x'=4\pi^2\int_{\omega} \abs{u_1(x_1,x')-a}^2\diff x'.
\]
By Fubini's theorem, we deduce that
\[
E(u)\ge\int_\Omega\big(\abs{\partial_1 u}^2+\abs{\nabla' u_1}^2+ W(u)\big)\diff x
\ge\int_{{{\T^{d-1}}}}E_{W_a}(u(\cdot,x'),\R)\diff x',
\]
where \(W_a(z):=W(z)+4\pi^2\abs{z_1-a}^2\) and, as usual,
$$E_{W_a}(\sigma, \R)=\int_{\R} \big(\abs{\dot \sigma}^2+W_a(\sigma)\big) \diff x_1,\quad \sigma\in \dot{H}^1(\R, \R^N).$$
Hence, $E_{W_a}(u(\cdot, x'), \R)<\infty$ for a.e. $x'\in \omega$.
Note that \(W_a\) is lower semicontinuous and satisfies assumptions $\textrm{\bf (H1)}$ (the set of zeros of \(W_a\) coincides with 
\(\Sigma^a\), which is finite by $\textrm{\bf (H1)}_a$) and the coercivity condition $\textrm{\bf (H2)}$ (thanks to $\textrm{\bf (H2)}_a$). Thus, Lemma 
\ref{closed_boundary} implies that for a.e. $x'\in \omega$, there exist two wells $u^\pm(x')$ of $W_a$ such that
\be
\label{pointa}
\lim\limits_{x_1\to\pm\infty}u(x_1, x')=u^\pm(x').
\ee
By \eqref{poincare_wirtinger}, as $\bar u(\pm \infty)=u^\pm$, we know that $\| u(R_n^\pm,\cdot)- u^\pm\|_{L^2(\omega,\R^N)}\to 0$ as $n\to\infty$ for two sequences $(R_n^\pm)_{n\in\N}\to \pm \infty$. Up to a subsequence, we deduce that
$u(R_n^\pm,\cdot)\to u^\pm$ a.e. in $\omega$ as $n\to \infty$. By \eqref{pointa}, we conclude that
$u^\pm(x')=u^\pm$ for a.e. $x'\in \omega$.
\end{proof}

\paragraph{Acknowledgment.} R.I. acknowledges partial support by the ANR project ANR-14-CE25-0009-01.

\end{document}